\newcommand{\confrac}[2]{%
  \frac{\displaystyle{%
    \strut\hfill{#1}\hfill\;\vrule}}%
      {\displaystyle{%
       \strut\vrule\;\hfill{#2}\hfill}}}%
    \newcommand\contFrac{\@ifstar{\@contFracStar}{\@contFracNoStar}}
   \def\singleContFrac#1#2{%
        \begin{array}{@{}c@{}}%
            \multicolumn{1}{c|}{#1}%
            \\%
            \hline%
           \multicolumn{1}{|c}{#2}%
        \end{array}%
   }
    \def\@contFracNoStar#1{%
        \mathchoice{
            \@contFracNoStarDisplay@#1//\@nil%
        }{
            \@contFracNoStarInline@#1//\@nil%
        }{
            \@contFracNoStarInline@#1//\@nil%
        }{
            \@contFracNoStarInline@#1//\@nil%
        }%
    }
    \def\@contFracNoStarDisplay@#1//#2\@nil{%
        \@ifmtarg{#2}{%
            #1%
        }{%
            #1+\cfrac{1}{\@contFracNoStarDisplay@#2\@nil}%
        }%
    }
        \def\@contFracNoStarInline@#1//#2\@nil{%
            \@ifmtarg{#2}{%
                #1%
            }{%
                #1 \@@contFracNoStarInline@@#2\@nil%
            }%
        }
        \def\@@contFracNoStarInline@@#1//#2\@nil{%
            \@ifmtarg{#2}{%
                + \singleContFrac{1}{#1}%
            }{%
                + \singleContFrac{1}{#1} \@@contFracNoStarInline@@#2\@nil%
            }%
        }
    \def\@contFracStar#1{%
        \mathchoice{
            \@contFracStarDisplay@#1////\@nil%
        }{
            \@contFracStarInline@#1//\@nil%
        }{
            \@contFracStarInline@#1//\@nil%
        }{
            \@contFracStarInline@#1//\@nil%
        }%
    }
    \def\@contFracStarDisplay@#1//#2//#3\@nil{%
        \@ifmtarg{#2}{%
            #1%
        }{%
            #1 + \cfrac{#2}{\@contFracStarDisplay@#3\@nil}%
        }%
    }
        \def\@contFracStarInline@#1//#2\@nil{%
            \@ifmtarg{#2}{%
                #1%
            }{%
                #1 \@@contFracStarInline@@#2\@nil%
            }%
        }
        \def\@@contFracStarInline@@#1//#2//#3\@nil{%
            \@ifmtarg{#3}{%
                + \singleContFrac{#1}{#2}%
            }{%
                + \singleContFrac{#1}{#2} \@@contFracStarInline@@#3\@nil%
            }%
        }
       \numberwithin{equation}{section}
\theoremstyle{plain}
\newtheorem{thm}{Theorem}[section]
\newtheorem{lem}[thm]{Lemma}
\newtheorem{pro}[thm]{Proposition}
\theoremstyle{definition}
\newtheorem{rem}[thm]{Remark}
\newtheorem*{prf*}{Proof}
\newtheorem*{pf*}{}
\newtheorem*{lem*}{LemmaA}
\newtheorem*{lm*}{LemmaB}
\title[Sets of numbers whose continued fractions contain APs]{Hausdorff dimension of sets of numbers\\
whose continued fractions contain\\ arbitrarily long arithmetic progressions}
\author{Yuto Nakajima, Hiroki Takahasi, Baowei Wang}
\date{}
\address{Faculty of Science and Engineering, Doshisha University, Kyoto, 610-0394, JAPAN}
\email{yunakaji@mail.doshisha.ac.jp}
\address{Keio Institute of Pure and Applied Sciences (KiPAS),  Department of Mathematics, Keio University, Yokohama, 223-8522, JAPAN}  \email{hiroki@math.keio.ac.jp}
\address{School of Mathematics and Statistics, Huazhong University of Science and Technology, Wuhan 430074, CHINA}
\email{bwei\_wang@hust.edu.cn}
\subjclass[2020]{11A55, 11K50, 28A80}
\thanks{{\it Keywords}:
arithmetic progression; continued fraction; Hausdorff dimension}
\begin{document}

\begin{abstract}Continued fractions with
prescribed structures on sequences of
their partial quotients have been intensively studied in the literature.
 As far
as an integer sequence, especially a randomly generated one
is concerned,
an attractive question is whether it contains arbitrarily long arithmetic progressions. In this paper we study the fractal structure of irrational numbers
whose sequences of partial quotients are strictly increasing and contain arbitrarily long, quantified arithmetic progressions.

\end{abstract}

\maketitle

\section{Introduction }
It looks like a paradox that randomly generated sequences always contain given patterns almost surely. For example, almost all real numbers are normal numbers, i.e., for almost all $x$, any given word $(w_1,\ldots, w_k)$
appears in the digit sequence of the $b$-adic expansion of $x$, $b\in\mathbb N_{\geq2}$ infinitely many times with frequency $b^{-k}$. As far as a randomly generated increasing integer sequence is concerned, 
 an attractive question is whether it contains arbitrarily long arithmetic progressions (APs). 

Szemer\'edi \cite{S75} confirmed a long standing conjecture of Erd\H{o}s and Tur\'an by showing that any subset of $\mathbb N$ with positive upper density contains APs of arbitrary lengths. Furstenberg \cite{Fur77,Fur81} established the multiple recurrence theorem and the correspondence principle, and used them to derive an ergodic proof of Szemer\'edi theorem.
Generalizing Roth's harmonic analysis proof \cite{R53} of the existence of APs of length $3$ in a subset of positive integers with positive upper density, Gowers \cite{G01} obtained a new proof of Szemer\'edi's theorem with concrete bounds.
Green and Tao \cite{GT08} proved that the set of prime numbers contains arbitrarily long APs.

While the existence of APs in subsets of $\mathbb N$ has been widely studied, much less is known about their appearance 
in structured, but thin sets arising from number-theoretic expansions. 
In this paper, we connect an increasing integer sequence with the number-theoretic expansion, i.e the sequence of partial quotients in the continued fraction expansion of an irrational number. Each irrational $x$ in $(0,1)$
   is expanded into the infinite
 regular continued fraction
\[x=\confrac{1 }{a_{1}(x)} + \confrac{1 }{a_{2}(x)}+ \confrac{1 }{a_{3}(x)}  +\cdots,\]
where $a_n(x)\in\mathbb N$ for  $n\geq1$ are called the partial quotients of $x$. Define
\[J=\{x\in (0,1)\setminus\mathbb Q\colon a_n(x)<a_{n+1}(x)\ \text{ for every } n\geq1\},\]
For a randomly chosen irrational number $x\in J$, it attaches a randomly generated increasing sequence via the sequence of partial quotients in the continued fraction expansion of $x$. One would like to clarify whether it is true that for ``most'' $x\in J$, the sequence of its partial quotients contains arbitrarily long arithmetic progressions.



APs in continued fractions were first studied by Tong and Wang \cite{TW}, where they proved that the set
of $x\in J$ whose partial quotients $\{a_n(x)\colon n\in\mathbb N\}$ contains APs of arbitrary lengths and arbitrary common differences
is of Hausdorff dimension $1/2$ which equals the dimension of $J$.
Since the map $x\in J\mapsto \{a_n(x)\colon n\in\mathbb N\}\in 2^{\mathbb N}$ induces a bijection between $J$ and the collection of infinite subsets of $\mathbb N$, $J$ is indeed a natural set to investigate.
Geometrically, $J$ corresponds to the set of oriented geodesics on the modular surface $\mathbb H/{\rm SL}(2,\mathbb Z)$ that go deeper and deeper into the cusp \cite{Ser85}.
Zhang and Cao \cite{ZC2} proved that
the set of points in $x\in J$ where the sequence of partial quotients of $x$ have upper Banach density $1$ is of Hausdorff dimension $1/2$.
In a related but different direction,
the first and the second-named authors \cite{NT25}
considered a larger set
\[E=\{x\in (0,1)\setminus\mathbb Q\colon a_m(x)\neq a_{n}(x)\ \text{ for all } m,n\in\mathbb N\text{ with }m\neq n\},\]
and proved that for any set $S\subset \mathbb N$ with positive upper Banach density, there is a set $E_S\subset E$ of Hausdorff dimension $1/2$ such that for any $x\in E_S$ and any integer $\ell\geq3$, there is a strictly increasing sequence $n_1,n_2,\ldots,n_\ell$ in $\mathbb N$ such that
 $a_{n_1}(x),a_{n_2}(x),\ldots,a_{n_\ell}(x)$ is an AP contained in $S$.

The sets considered above give no quantitative information about the arithmetic progressions, and  these subsets of $J$ considered in \cite{TW,ZC2} have full Hausdorff dimension in $J$.
What are the sizes of subsets of $J$ with quantitative information on APs? Do these further restrictions cause a dimension drop?
To tackle these questions we introduce the following two sets.

Let $(\nu_n)_{n=1}^{\infty}$ and $(\sigma_n)_{n=1}^{\infty}$ be two
increasing sequences of integers in $\mathbb N$ with $\sigma_{n+1}-\sigma_n\geq n$ for all sufficiently large $n$.
Define respectively the set
\[F((\nu_n)_{n=1}^\infty)=\{x\in J\colon a_{n}(x),\ldots,a_{n+\nu_n-1}(x)\ {\text{is an AP for infinitely many}}\ n\geq1\},\]
i.e., with given length on the APs at the $n$th position; 
and the set
\[ G((\sigma_n)_{n=1}^\infty)=\{x\in J\colon a_{\sigma_n}(x),\ldots,a_{\sigma_n+n-1}(x) \ {\text{is an AP for all}}\ n \ {\text{large}}\}\]
i.e., with given positions where an AP of length $n$ follows. We call them {\it $F$-sets} and {\it $G$-sets} respectively.


The assumption on $(\sigma_n)_{n=1}^\infty$
ensures that there is no overlap between the two consecutive APs $a_{\sigma_n}(x),\ldots,a_{\sigma_n+n-1}(x)$ and  $a_{\sigma_{n+1}}(x),\ldots,a_{\sigma_{n+1}+n}(x)$.


\subsection{Statements of results}
We determine the Hausdorff dimension of $F$-sets
 and $G$-sets.
Let $\dim_{\rm H}$ denote the Hausdorff dimension on the Euclidean space $[0,1]$.

\begin{thm}
\label{thmE}
Let $(\nu_n)_{n=1}^\infty$ be an increasing sequence of integers in $\mathbb N$. We have \[\dim_{\rm H} F((\nu_n)_{n=1}^\infty)=\begin{cases}\displaystyle{\frac{1}{2(1+\alpha)}}&\text{ if }\alpha<\infty,\\
 \ \ \ 0&\text{ otherwise.}  \end{cases},\ \ \  {\text{where}}\ \alpha=\liminf_{n\to\infty}\frac{\nu_n}{n}. \]
\end{thm}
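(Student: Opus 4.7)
Write $F=F((\nu_n)_{n=1}^\infty)=\limsup_{n\to\infty}F_n$, where $F_n=\{x\in J:a_n(x),\ldots,a_{n+\nu_n-1}(x)$ is an AP$\}$. For each $n$, $F_n$ decomposes as the disjoint union of level-$(n+\nu_n-1)$ cylinders $I_{n+\nu_n-1}(a_1,\ldots,a_{n-1},a_n,a_n+d,\ldots,a_n+(\nu_n-1)d)$, indexed by strictly increasing $a_1<\cdots<a_n$ in $\mathbb N$ and $d\geq 1$.

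\textbf{Upper bound.} Fix $s>\frac{1}{2(1+\alpha)}$ (and any $s>0$ when $\alpha=\infty$). The plan is to apply the Hausdorff--Cantelli lemma, so it suffices to show $\sum_n\mathcal H^s_\infty(F_n)<\infty$. Using the standard bound $|I_N(b_1,\ldots,b_N)|\leq\prod_{i=1}^N b_i^{-2}$, we have
\[
\mathcal H^s_\infty(F_n)\leq\mathcal S_n:=\sum_{a_1<\cdots<a_n}\sum_{d\geq 1}\prod_{i=1}^n a_i^{-2s}\cdot\prod_{k=1}^{\nu_n-1}(a_n+kd)^{-2s}.
\]
The main estimate is the weighted AM--GM inequality $a_n+kd\geq a_n^{1-\beta}(kd)^\beta$ valid for any $\beta\in[0,1]$, which gives $\prod_{k=1}^{\nu_n-1}(a_n+kd)^{-2s}\leq a_n^{-2s(1-\beta)(\nu_n-1)}((\nu_n-1)!)^{-2s\beta}d^{-2s\beta(\nu_n-1)}$. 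For $s<1/2$, an integral comparison bounds the inner sum $\sum_{a_1<\cdots<a_{n-1}<a_n}\prod_{i<n}a_i^{-2s}$ by $\frac{a_n^{(1-2s)(n-1)}}{(n-1)!(1-2s)^{n-1}}$. After summing over $a_n\geq n$ and $d\geq 1$, the resulting exponent on $a_n$ is $(n-1)-2s[n+(1-\beta)(\nu_n-1)]$; using $\nu_n\geq(\alpha-\varepsilon)n$ for large $n$ and choosing $\beta>0$ small enough that $s>\frac{1}{2(1+(1-\beta)\alpha)}$ (possible since $s>\frac{1}{2(1+\alpha)}$), this exponent is smaller than $-cn$ for some $c>0$, producing super-exponential decay of $\mathcal S_n$. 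The $d$-sum converges as soon as $2s\beta(\nu_n-1)>1$, which is eventually automatic. Hence $\sum_n\mathcal S_n<\infty$.

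\textbf{Lower bound.} For $s<\frac{1}{2(1+\alpha)}$, construct a Cantor subset $C\subset F$ with $\dim_{\rm H} C\geq s$ via the mass-distribution principle. Choose a subsequence $(m_k)$ with $\nu_{m_k}/m_k\to\alpha$, sufficiently sparse that the windows $W_k=[m_k,m_k+\nu_{m_k}-1]$ are disjoint. Declare the complement of $\bigcup_k W_k$ the \emph{free} positions; at each free position $i$ let $a_i$ range over a prescribed integer interval $[M_i,2M_i]$, with $M_i$ growing fast enough (e.g.\ $M_i\asymp C^i$ for large $C$) to guarantee strict increase, and at each AP-block $W_k$ let $a_{m_k}$ and $d$ range over prescribed large intervals, arranging values so that $a_{m_k}$ exceeds the previous free value and $a_{m_k+\nu_{m_k}}$ may be chosen strictly greater than $a_{m_k}+(\nu_{m_k}-1)d$. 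Put the uniform measure $\mu$ on admissible cylinders at each stage; a direct count shows that $\log\#\{\text{admissible cylinders at level }N\}\sim\frac{N\log C}{1+\alpha}$ (since the asymptotic density of free positions is $1/(1+\alpha)$) while $-\log|I_N|\sim 2N\log C$. Thus $\mu$ has local dimension $\frac{1}{2(1+\alpha)}$ almost surely, and the mass-distribution principle yields $\dim_{\rm H} C\geq s$.

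\textbf{Main obstacle.} The delicate step is the upper bound for $s<1/2$: the naive factorisation $\mathcal S_n=A_n B_n$ with $A_n=\sum_{a_1<\cdots<a_{n-1}}\prod a_i^{-2s}$ gives $A_n=\infty$, which reflects the fact that $\dim_{\rm H}J=1/2$ so a cover that decouples the ``free'' part and the ``AP'' part cannot work below $s=1/2$. The remedy is to sum over $a_n$ \emph{last}, exploiting the constraint $a_{n-1}<a_n$ to turn the $(n-1)$-fold sum into a polynomial factor $a_n^{(1-2s)(n-1)}$, which is then absorbed by the AM--GM bound on the AP product. The threshold $\frac{1}{2(1+\alpha)}$ emerges precisely as the balance point between these two competing exponents.
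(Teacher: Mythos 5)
Your \textbf{upper bound} is essentially correct and takes a genuinely different route from the paper. The paper controls the sum over the common difference $M$ via the elementary inequality $\prod_{i=0}^{\ell}(a+iM)\ge a^{\ell}+M^{\ell}$ (its Lemma~\ref{lem-AP}), and then handles the strictly increasing free block by summing the variables one at a time from the innermost index outward (Lemma~\ref{descend}), each summation lowering the exponent by $2s-1$. You instead split the AP product by weighted AM--GM, $a_n+kd\ge a_n^{1-\beta}(kd)^{\beta}$, which decouples the $d$-sum, and you bound the ordered free sum by $\frac{1}{(n-1)!}\bigl(\sum_{a<a_n}a^{-2s}\bigr)^{n-1}$, summing over $a_n$ last. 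Both arguments isolate the same balance $(1-2s)n\lesssim 2s\nu_n$ and both exploit the strict monotonicity of digits in an essential way; yours trades the paper's iterated integral comparison for a factorial factor, and the extra parameter $\beta$ costs you only an $\varepsilon$ in the threshold, which is harmless. Your remark on why the naive factorisation fails (the free part alone has divergent $s$-sum for $s<1/2$) correctly identifies the same difficulty the paper's Lemma~\ref{descend} is designed to overcome.

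The \textbf{lower bound}, however, has a genuine gap: the Cantor set you build does not have local dimension $\frac{1}{2(1+\alpha)}$. With free digits $a_i\asymp C^i$ one has $\log a_i\asymp i\log C$, so at the level $N=m_k+\nu_{m_k}-1$ closing the $k$-th window,
\[
-\log|I_N|\asymp 2\sum_{i\le N}\log a_i \asymp 2\log C\Bigl(\tfrac{m_k^2}{2}+\nu_{m_k}\,m_k\Bigr),
\]
not $2N\log C$ as you claim: the free part contributes $\sum_{i\le m_k}i\log C\asymp \tfrac12 m_k^2\log C$, while each of the $\nu_{m_k}$ window digits is an AP term $a_{m_k}+jd$, which (for any admissible $d$) stays of size roughly $a_{m_k}\asymp C^{m_k}$ and hence contributes only $m_k\log C$, not $i\log C$. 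Since the entropy numerator is $\sum_{\text{free }i\le N}\log\#[M_i,2M_i]\asymp\tfrac12 m_k^2\log C$, the ratio tends to $\frac{1}{2(1+2\nu_{m_k}/m_k)}\to\frac{1}{2(1+2\alpha)}$, which is strictly smaller than $\frac{1}{2(1+\alpha)}$ whenever $\alpha>0$. Taking $d$ large only makes the denominator bigger, and letting $C\to\infty$ does not change the ratio, so the geometric construction cannot be pushed to the right value. Your heuristic ``density of free positions $=1/(1+\alpha)$ implies local dimension $\frac{1}{2(1+\alpha)}$'' is only valid when every position contributes the same amount to $-\log|I_N|$, which fails badly for geometrically growing digits.

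The fix is to make the digit ranges grow \emph{polynomially}: the paper takes $a_i\in[(2i)^t,(2i+1)^t)$ at free positions, so that $\log\#L_i\approx(t-1)\log(2i)$ and $\log a_i\approx t\log(2i)$; since $\log(2i)$ is essentially constant across the window $[m_k,(1+\alpha)m_k]$, each window position costs about the same $t\log(2m_k)$ as a free position would, the density heuristic becomes exact, and one obtains local dimension $\frac{t-1}{2t(1+\alpha)}$. One then needs the extra limit $t\to\infty$ over this one-parameter family of Cantor sets to reach $\frac{1}{2(1+\alpha)}$; a single construction does not suffice. You would need to replace your geometric ranges by such polynomial ones (and add the $t\to\infty$ step) for the lower bound to close.
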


\begin{thm}\label{thmG}Let $(\sigma_n)_{n=1}^\infty$ be an increasing sequence of integers in $\mathbb{N}$
with $\sigma_{n+1}-\sigma_n\ge n$ for all $n\ge 1$ and
\begin{equation}\label{thmG-eq}\lim_{n\to\infty}\frac{\sigma_{n+1}-\sigma_n}{n}:= \beta\ge 1.\end{equation}
Then we have \[\dim_{\rm H}  G((\sigma_n)_{n=1}^\infty)=\begin{cases}\vspace{1.5mm}\displaystyle{\frac{\beta-1}{2\beta}}&\text{ if }\beta<\infty,\\
\displaystyle{\ \  {1}/{2}}&\text{ otherwise}.\end{cases}
\]
\end{thm}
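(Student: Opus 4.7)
The plan is to establish matching upper and lower bounds on $\dim_{\rm H}G((\sigma_n)_{n=1}^\infty)$. The heuristic guiding both halves is the following count: among the first $L_N := \sigma_N+N-1$ partial quotients of any $x\in G((\sigma_n))$, exactly $T_N := N(N+1)/2$ are locked inside the $N$ prescribed AP blocks, while the remaining $L_N-T_N$ positions are constrained only by the strict monotonicity that defines $J$. Each block of length $n$ contributes only two free parameters $(b_n,d_n)$, so the effective number of free coordinates at scale $L_N$ is $L_N-T_N+2N$. Using \eqref{thmG-eq}, $\sigma_N\sim\beta N^2/2$, whence this number is asymptotically a fraction $(\beta-1)/\beta$ of $L_N$, and multiplying by $\dim_{\rm H}J=1/2$ predicts exactly $(\beta-1)/(2\beta)$ when $\beta<\infty$, and $1/2$ when $\beta=\infty$. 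For $\beta=\infty$ the upper bound is immediate from $G((\sigma_n))\subset J$.

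\textbf{Upper bound (when $\beta<\infty$).} Fix $s>(\beta-1)/(2\beta)$. Writing $G((\sigma_n))=\bigcup_{N_0}G_{N_0}$ where $G_{N_0}$ is the set of $x$ for which the AP condition holds for all $n\ge N_0$, it suffices to show $\mathcal{H}^s(G_{N_0})=0$ for every $N_0$. I would cover $G_{N_0}$ by the family $\mathcal{C}_N$ of rank-$L_N$ cylinders $I_{L_N}(a_1,\ldots,a_{L_N})$ consistent with strict monotonicity and with the AP condition for $N_0\le n\le N$. The classical estimate $|I_{L_N}|\asymp q_{L_N}^{-2}$ together with $q_{L_N}\ge\prod_{i=1}^{L_N}a_i$ gives
\[\sum_{I\in\mathcal{C}_N}|I|^s\ll\sum\prod_{i=1}^{L_N}a_i^{-2s},\]
with the outer sum over admissible sequences. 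Parameterizing each AP by $(b_n,d_n)$ and using the elementary bound $\prod_{i=0}^{n-1}(b_n+id_n)\ge b_n\cdot d_n^{n-1}(n-1)!$, each block contributes only two summation indices rather than $n$. Combined with the standard monotonicity estimate for the free coordinates and the asymptotic $(\sigma_{n+1}-\sigma_n)/n\to\beta$, one verifies that the total tends to $0$ as $N\to\infty$ whenever $s>(\beta-1)/(2\beta)$.

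\textbf{Lower bound.} I would construct a Cantor subset $E\subset G((\sigma_n))$ realizing the claimed dimension. Fix a large integer $k$ and set $M_j:=j^k$. Require, at every free position $j$, that $a_j\in\{M_j+1,\ldots,2M_j\}$ and $a_j>a_{j-1}$; at every AP block of length $n$, force $d_n=1$ and $b_n=a_{\sigma_n-1}+1$, so that the block is $a_{\sigma_n-1}+1,\ldots,a_{\sigma_n-1}+n$. Since $M_{\sigma_n+n}\gg a_{\sigma_n-1}+n$ once $k$ is large enough, the resulting sequences lie in $J\cap G((\sigma_n))$. Endow $E$ with the natural measure $\mu$ distributing mass uniformly over the $\sim M_j$ admissible values at each free position. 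The classical cylinder-ball comparability and $|I|\asymp q^{-2}$ yield
\[\liminf_{r\to0}\frac{\log\mu(B(x,r))}{\log r}\ge\frac{\sum_{j\in\text{Free}_N}\log M_j}{2\sum_{i=1}^{L_N}\log a_i}\quad \mu\text{-a.e.}\]
Direct computation shows $\sum_{\text{Free}_N}\log M_j\sim k(\beta-1)N^2\log N$ and $\sum_{i=1}^{L_N}\log a_i\sim k\beta N^2\log N$ (the AP-position contribution to the denominator supplying exactly the ``$1$'' in $\beta=(\beta-1)+1$), whence this local-dimension lower bound is $(\beta-1)/(2\beta)-O(1/k)$. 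Letting $k\to\infty$ and invoking the mass distribution principle completes the bound; the case $\beta=\infty$ is identical, with the AP contribution becoming asymptotically negligible and yielding $1/2$.

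\textbf{Main obstacle.} The principal difficulty in both halves is the coupling between AP blocks and free positions forced by strict monotonicity: $b_n$ must strictly exceed $a_{\sigma_n-1}$ and $b_n+(n-1)d_n$ must be strictly less than $a_{\sigma_n+n}$, both free coordinates. This prevents a clean product decomposition in the covering sum for the upper bound, and forces the Cantor construction to balance the window widths $M_j$ so that the forced AP values remain monotone with the subsequent free coordinate without collapsing the total entropy. Quantifying this coupling precisely --- so that the product-over-blocks structure persists in the summation estimates and in the measure $\mu$ --- is where the specific asymptotic $\beta$ enters and where most of the technical work concentrates.
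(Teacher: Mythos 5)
Your architecture coincides with the paper's: the upper bound is a cylinder covering that treats each AP block through its two parameters, the lower bound is a Cantor set with polynomial windows at free positions and difference-one APs, a product-type measure, the mass distribution principle, and a limit in the window exponent. The heuristic count $(\beta-1)/\beta$ of free coordinates times $\dim_{\rm H}J=1/2$ is exactly the paper's guiding computation. However, two points you defer are where the actual content lies. For the upper bound, the step you label ``one verifies that the total tends to $0$'' is precisely the coupling you then name as the main obstacle, and you do not supply the idea that resolves it: the paper's Lemma~\ref{descend} performs an iterated integral-test estimate over the strictly increasing free digits $c\le a_1<\cdots<a_n$ weighted by the exponent $\gamma$ inherited from the following AP block, valid only under $\gamma\ge 2-(n-1)(2s-1)$, i.e.\ $(2s-1)(\sigma_{n+1}-\sigma_n)+n-2\ge\delta$; this inequality is exactly where $s>(\beta-1)/(2\beta)$ enters, and without it the block-by-block sums do not close (indeed each single free sum $\sum_a a^{-2s}$ diverges for $s<1/2$, so the monotonic coupling cannot be discarded). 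Naming the obstacle is not the same as overcoming it.

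For the lower bound, your windows $\{M_j+1,\ldots,2M_j\}$ with $M_j=j^k$ overlap between consecutive positions ($2(j-1)^k>j^k+1$ for $j\gg k$), so the measure ``uniform over the $\sim M_j$ admissible values'' is not well defined: conditioned on $a_{j-1}$, the number of admissible values for $a_j$ can be as small as $2j^k-2(j-1)^k\approx 2kj^{k-1}\ll M_j$, and similarly $(\sigma_n+n)^k$ is \emph{not} $\gg 2(\sigma_n-1)^k+n$ for fixed $k$ and large $n$, so the forced AP values do not automatically sit below the next free window. Your $O(1/k)$ error term and the limit $k\to\infty$ would absorb the corrected entropy count, but as written the construction needs repair. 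The paper avoids this entirely by taking windows $[(2n)^t,(2n+1)^t)$, which are pairwise increasing, so monotonicity across free positions and past the AP blocks is automatic (Lemma~\ref{abstract}(b)) and the measure is a genuine product; the resulting local dimension $(t-1)(\beta-1)/(2t\beta)$ then tends to the right value as $t\to\infty$. So: right approach, but the two technical cruxes (the iterated monotone-sum lemma and the disjoint-window design) are exactly the parts left open.
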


Ramharter \cite{R85} proved that $\dim_{\rm H}J=1/2$. His result is
   a refinement of Good's
landmark theorem \cite[Theorem~1]{G} in dimension theory of continued fractions
which states that the set $\{x\in(0,1)\setminus\mathbb Q\colon a_n(x)\to\infty\text{ as }n\to\infty\}$ is of Hausdorff dimension $1/2$.
 As asked before: 
 what kind of prescribed conditions on APs  cause a dimension drop from $1/2$. On $F$-sets, by Theorem~\ref{thmE} there is no dimension drop if and only if $\alpha=0$. On $G$-sets, by Theorem~\ref{thmG}
there is no dimension drop if and only if $\beta=\infty$.


\if0
\textcolor{red}{\begin{rem}
  Let $(\sigma_n)_{n=3}^\infty$ be as before, and define
\[ \tilde G((\sigma_n)_{n=1}^\infty)=\{x\in J\colon \text{$a_{\sigma_n}(x),\ldots,a_{\sigma_n+n-1}(x)$ is an AP for infinitely many $n\geq3$}\}.\]
Slightly modifying the proof of Theorem~\ref{thmG}, one can show that if $\beta\in[1,\infty]$ and  \[\limsup_{n\to\infty}\frac{\sigma_{n+1}-\sigma_n}{n}= \beta,\] then  \[\dim_{\rm H}  \tilde G((\sigma_n)_{n=1}^\infty)=\begin{cases}\vspace{1.5mm}\displaystyle{\frac{\beta-1}{2\beta}}&\text{ if }\beta<\infty,\\
\displaystyle{\ \  \frac{1}{2}}&\text{ otherwise}.\end{cases}
\] In particular, if \eqref{thmG-eq} holds then
$\dim_{\rm H} \tilde G((\sigma_n)_{n=1}^\infty)=\dim_{\rm H} G((\sigma_n)_{n=1}^\infty).$
\end{rem}}
\fi

\subsection{Outlines of proofs of the main results}\label{outline} To prove the theorems, we establish upper and lower bounds of the Hausdorff dimension of the sets. 
The upper bounds rely on the covering argument by fundamental intervals, and the lower bounds rely on the mass distribution principle. We recall these basic ingredients in \S2. In \S3  we prove upper bounds (Propositions~\ref{upper-prop} and \ref{upper-prop-new}). In \S4 we prove lower bounds (Propositions~\ref{lower-prop} and \ref{lower-prop2}), and combine them with the upper bounds obtained in \S3 to complete the proofs of the theorems.

The afore-mentioned previous works \cite{NT25,TW,ZC2} on sets
of Hausdorff dimension $1/2$ related to APs were concerned with lower bounds only, as
the upper bound $1/2$ was known \cite{G,R85}.
For points $x$ in $F$- or $G$-sets, their partial quotients $(a_n(x))_{n=1}^\infty$ are alternate concatenations of segments of
two kinds: (i) segments consisting of arithmetic progressions, called {\it AP segments}; (ii)
segments with no constraint, called {\it free segments}.
We choose coverings that capture the distributions of these segments. The strict monotonicity of digits is crucial for
an estimation of their Hausdorff dimensions. 

Establishing the lower bounds breaks into three steps. First we construct a subset of the target set, and then construct a Borel probability measure that gives full measure to this subset. Finally we use the mass distribution principle, and obtain a lower bound of the Hausdorff dimension of the subset that is also a lower bound of the target set. To obtain the lower bound that matches the upper one obtained in \S3, we need to construct a good subset in the first step. Although the first and second steps can be in part generalized (see Lemma~\ref{abstract}),
the last step requires a detailed analysis, for both $F$-sets and $G$-sets separately, depending on the lengths and positions of APs.

\if0\textcolor{red}{\medskip
\subsection{Open questions}
We close the introduction by posing several questions.
\begin{itemize}
\item (relaxing the strict monotonicity of digits)
Can one prove a statement analogous to Theorem~\ref{thmE} or \ref{thmG} replacing $J$ by the set
\[\{x\in (0,1)\setminus\mathbb Q\colon a_n(x)\leq a_{n+1}(x)\ \text{ for every } n\geq1\}?\]
Also, 
can one replace $J$ by the set $E$ considered in \cite{NT25}?
We re-iterate that the strict monotonicity of digits is crucial for our upper bounds obtained in this paper.
\item (replacing $J$ by subsets with fast growing digits)
Can one prove a statement analogous to Theorem~\ref{thmE} or \ref{thmG} replacing $J$ by its subsets of the form
\[\{x\in J\colon a_n(x)\geq \psi(n)\ \text{ for every } n\geq1\}?\]
Here, $\psi\colon\mathbb N\to\mathbb R_+$ is a function with
$\psi(n)\to\infty$ as $n\to\infty$. For some $\psi$ that grows fast, the Hausdorff dimension drops from $1/2$, see  \cite{FWLT97,Luc97} for details.
\end{itemize}}\fi
\section{Preliminaries}
This section summarizes results that will be used in the proofs of the theorems. 
\subsection{Fundamental intervals for the continued fraction}\label{CF-sec}
For each $n\in \mathbb N$
and $(a_1,\ldots, a_n)\in \mathbb N^n$,
define non-negative integers
 $p_n$ and  $q_n=q_n(a_1,..., a_n)$ by the recursive formulas
\begin{equation}\label{pq}\begin{split}p_{-1}=1,\ p_0=0,\ p_i&=a_ip_{i-1}+p_{i-2} \ \text{ for  } i=1,\ldots,n,\\
q_{-1}=0,\ q_0=1,\ q_i&=a_iq_{i-1}+q_{i-2} \ \text{ for  } i=1,\ldots,n.\end{split}\end{equation}
For $n\in\mathbb N$ and
$(a_1,\ldots, a_n)\in \mathbb N^n$, we define an {\it $n$-th fundamental interval} by
\[ I_n(a_1,\ldots, a_n)=
\begin{cases}\vspace{1mm}
\left[\displaystyle{\frac{p_n}{q_n}}, \frac{p_n+p_{n-1}}{q_n+q_{n-1}}\right)&\text{if}\ n\ \text{is even,}\\
\displaystyle{\left(\frac{p_n+p_{n-1}}{q_n+q_{n-1}}, \frac{p_n}{q_n}\right]}&\text{if}\ n\ \text{is odd,}
 \end{cases}
 \]
This interval represents the set of elements of $(0, 1]$ that have the finite or infinite regular continued fraction expansion beginning by $a_1,\ldots,a_n$, i.e.,
\[I_n(a_1,\ldots, a_n)=\{x\in (0, 1]\colon a_i(x)=a_i\ \text{ for }i=1,\ldots,n\}.\]

Let $|\cdot|$ denote the Euclidean diameter of sets in $[0,1]$. 
\begin{lem}[{\cite{IK,Khi64,NT25}}]\label{qn}
For any $n\ge 1$ and any $(a_1,\ldots, a_n)\in \mathbb N^{n},$ we have
\begin{equation}\label{qn-1} \frac{1}{2q_n^2}\leq|I_n(a_1,\ldots,a_n)|
\leq\frac{1}{q_n^2}\end{equation}
and \begin{equation}\label{qn-2}\frac{1}{2}\prod_{i=1}^{n}\frac{1}{(a_{i}+1)^{2}}\le |I_n(a_1,\ldots, a_n)|\le\prod_{i=1}^{n}\frac{1}{a_{i}^{2}}.\end{equation}
Moreover, for any $1\le k\le n$ we have
\begin{equation}\label{qn-3}1\le \frac{q_n(a_1,..., a_n) }{q_k(a_1,..., a_k)q_{n-k}(a_{k+1},..., a_{n}) }\le 2.\end{equation}
\end{lem}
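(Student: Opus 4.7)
My plan is to derive all three inequalities from the matrix form of the recursion \eqref{pq}, namely
\[
\begin{pmatrix} q_n & q_{n-1} \\ p_n & p_{n-1} \end{pmatrix} = \prod_{i=1}^{n} \begin{pmatrix} a_i & 1 \\ 1 & 0 \end{pmatrix},
\]
which is immediate by induction on $n$. Two consequences will do all the work: taking determinants gives $p_{n-1}q_n - p_n q_{n-1} = (-1)^n$, and reading off the recursion shows monotonicity $1 = q_0 \leq q_1 \leq q_2 \leq \cdots$.

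For \eqref{qn-1}, I would simply compute the length from the endpoints of $I_n(a_1,\ldots,a_n)$:
\[
|I_n(a_1,\ldots,a_n)| = \left|\frac{p_n}{q_n} - \frac{p_n+p_{n-1}}{q_n+q_{n-1}}\right| = \frac{|p_n q_{n-1} - p_{n-1}q_n|}{q_n(q_n+q_{n-1})} = \frac{1}{q_n(q_n+q_{n-1})},
\]
and sandwich this between $1/(2q_n^2)$ and $1/q_n^2$ using $0 \leq q_{n-1} \leq q_n$. For \eqref{qn-2}, a one-line induction on the recursion establishes $\prod_{i=1}^n a_i \leq q_n \leq \prod_{i=1}^n (a_i+1)$—the upper bound comes from $q_i = a_i q_{i-1} + q_{i-2} \leq (a_i+1)q_{i-1}$, and the lower bound from dropping the $q_{i-2}$ term—after which substitution into \eqref{qn-1} gives the stated bounds.

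For \eqref{qn-3} the plan is to split the matrix product at position $k$:
\[
\begin{pmatrix} q_n & q_{n-1} \\ p_n & p_{n-1} \end{pmatrix} = \begin{pmatrix} q_k & q_{k-1} \\ p_k & p_{k-1} \end{pmatrix} \begin{pmatrix} \tilde q_{n-k} & \tilde q_{n-k-1} \\ \tilde p_{n-k} & \tilde p_{n-k-1} \end{pmatrix},
\]
where tildes denote the quantities associated with the tail $(a_{k+1},\ldots,a_n)$. The $(1,1)$ entry reads $q_n = q_k\,\tilde q_{n-k} + q_{k-1}\,\tilde p_{n-k}$. Positivity of all factors immediately gives the lower bound $q_n \geq q_k\,\tilde q_{n-k}$. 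For the upper bound I will combine $q_{k-1} \leq q_k$ with the standard shift identity $\tilde p_{n-k} = q_{n-k-1}(a_{k+2},\ldots,a_n)$, which together with monotonicity yields $\tilde p_{n-k} \leq \tilde q_{n-k}$; hence $q_n \leq 2q_k\,\tilde q_{n-k}$.

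There is no serious obstacle, as this is entirely classical material. The only bookkeeping points are handling the absolute value in \eqref{qn-1} regardless of the parity of $n$, and checking the edge cases $k=1$ and $k=n-1$ in the splitting—both of which are absorbed by the initial conventions $p_{-1}=1$, $q_{-1}=0$, $q_0=1$.
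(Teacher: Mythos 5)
Your proof is correct: the matrix identity, the determinant computation giving $|I_n|=1/(q_n(q_n+q_{n-1}))$, the bounds $\prod_i a_i\le q_n\le\prod_i(a_i+1)$, and the splitting $q_n=q_k\,\tilde q_{n-k}+q_{k-1}\,\tilde p_{n-k}$ with $q_{k-1}\le q_k$ and $\tilde p_{n-k}\le\tilde q_{n-k}$ all check out, including the edge cases. The paper does not prove this lemma but cites it from the literature (Iosifescu--Kraaikamp, Khinchin), and your argument is precisely the classical one found there, so there is nothing to add.
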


\subsection{The Gauss map}\label{gauss}
The regular continued fraction is generated by the iteration of
the Gauss map $T\colon(0,1]\to[0,1)$ given by
\[T(x)=\frac{1}{x}-\left\lfloor \frac{1}{x}\right\rfloor.\]
We have $a_n(x)=\lfloor 1/T^{n-1}x\rfloor$ for $x\in(0,1)\setminus\mathbb Q$ and $n\in\mathbb N$. In particular, $T$
acts on the set of infinite continued fractions as the shift:
$a_n(T^kx)=a_{k+n}(x)$ for $x\in(0,1)\setminus\mathbb Q$ and
$k,n\in\mathbb N$. Since the Hausdorff dimension is invariant under the action of a bi-Lipschitz map, the action of $T$ preserves the Hausdorff dimension of sets.

\subsection{Key inequalities}\label{key-up}
As outlined in \S\ref{outline},
to obtain upper bounds we choose coverings that capture distributions of AP and free segments.
The next lemma will be used to bound contributions from AP segments.
\begin{lem}\label{lem-AP} Let $s\in(0,1/2]$. For all $a, \ell\in\mathbb N$ with $2s\ell >1$ we have
\[\sum_{M=1}^{\infty}\prod_{i=0}^{\ell}\frac{1}{(a+iM)^{2s}}
\leq\frac{1}{a^{2s\ell -1}}\frac{2s\ell}{2s\ell-1}.\]
\end{lem}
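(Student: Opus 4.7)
The plan is to split the summation at $M=a$ and bound the product $\prod_{i=0}^{\ell}(a+iM)^{-2s}$ by two different elementary inequalities in complementary regimes, each tight where it is used.

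For $1\le M\le a$ I would use the crude inequality $a+iM\ge a$ valid for every $0\le i\le \ell$, which gives the pointwise bound $\prod_{i=0}^{\ell}(a+iM)^{-2s}\le a^{-2s(\ell+1)}$. Since this range contains exactly $a$ integers, the contribution of the ``head'' is at most $a\cdot a^{-2s(\ell+1)}=a^{1-2s(\ell+1)}$.

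For $M>a$ I would switch to the sharper inequality $a+iM\ge iM$, available for $i\ge 1$, and retain the factor $a^{-2s}$ from the $i=0$ term; this yields the termwise bound $a^{-2s}(\ell!)^{-2s}M^{-2s\ell}$. The hypothesis $2s\ell>1$ makes the tail convergent, and a direct integral comparison gives $\sum_{M>a}M^{-2s\ell}\le a^{1-2s\ell}/(2s\ell-1)$, so the ``tail'' contributes at most $a^{-2s}(\ell!)^{-2s}a^{1-2s\ell}/(2s\ell-1)$.

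Adding the two contributions produces $a^{1-2s\ell-2s}\bigl(1+(\ell!)^{-2s}/(2s\ell-1)\bigr)$. Using $a\ge 1$ to absorb the extra $a^{-2s}$ and $(\ell!)^{-2s}\le 1$ to simplify the prefactor collapses this to $a^{1-2s\ell}\cdot 2s\ell/(2s\ell-1)$, which is exactly the claimed bound. I do not anticipate any serious obstacle; the only mild calibration is to place the threshold of the split at $M=a$ so that the two pieces carry the same $a$-power (up to a harmless $a^{-2s}$), which is what forces both inequalities to contribute the right scale for the combined estimate.
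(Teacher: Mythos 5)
Your proof is correct and follows essentially the same route as the paper: both arguments split the sum at $M=a$, bound the head by $a\cdot a^{-2s\ell}$-type estimates, and control the tail via $\sum_{M>a}M^{-2s\ell}\le a^{1-2s\ell}/(2s\ell-1)$. The only difference is cosmetic — the paper first passes through the intermediate bound $\prod_{i=0}^{\ell}(a+iM)\ge (a+M)^{\ell}\ge a^{\ell}+M^{\ell}$ before splitting, whereas you bound the product separately in each regime (picking up the harmless extra factors $a^{-2s}$ and $(\ell!)^{-2s}$, which you correctly discard).
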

\begin{proof}Since
$\prod_{i=0}^{\ell}(a+iM)\geq (a+M)^{\ell}\geq a^{\ell}+M^{\ell}$, we have
\[ \begin{split}\sum_{M=1}^{\infty}\prod_{i=0}^{\ell}\frac{1}{(a+iM)^{2s}}
&\le  \sum_{M=1}^{\infty}\frac{1}{(a^{\ell}+M^{\ell})^{2s}}\leq\sum_{M=1}^{ a }\frac{1}{a^{2s\ell }}+\sum_{M=a+1}^\infty\frac{1}{M^{2s\ell}}\\
&\le \frac{1}{a^{2s\ell -1}}+ \frac{1 }{a^{2s\ell -1}}\cdot\frac{1}{2s\ell-1}=\frac{1}{a^{2s\ell -1}}\frac{2s\ell }{2s\ell-1},\end{split}\]
as required.
 \end{proof}

The next lemma will be used in two ways: to bound contributions from one AP segment and the preceding remaining segments for $F$-sets; to bound contributions
from free segments and the subsequent AP segments for $G$-sets.

\begin{lem}\label{descend}
Let $c,n\in\mathbb N_{\geq2}$ and $s\in(0,1/2]$.
For any $\gamma\ge 2-(n-1)(2s-1)$ we have
\[\sum_{\substack{(a_1,\ldots,a_n)\in\mathbb N^n  \\ c\leq a_1<a_2<\cdots<a_n}}\frac{1}{(a_1\cdots a_{n-1})^{2s}a_n^{\gamma}}\leq \frac{1}{(c-1)^{\gamma+n(2s-1)-2s}}.\] \end{lem}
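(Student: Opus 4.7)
The plan is to prove the inequality by induction on $n$, using a single-step recursion $(n,\gamma)\mapsto(n-1,\gamma+2s-1)$ that integrates out the last summation variable. It is cleanest to prove the slightly stronger statement for all $n\ge 1$, interpreting the empty product $a_1\cdots a_0$ as $1$, and to take $n=1$ as the base case.

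For the base case $n=1$, the admissibility hypothesis $\gamma\ge 2-(n-1)(2s-1)$ reduces to $\gamma\ge 2$. Since $x\mapsto x^{-\gamma}$ is positive and decreasing, I would bound $\sum_{a\ge c}a^{-\gamma}\le \int_{c-1}^{\infty}x^{-\gamma}\,dx=\frac{1}{(\gamma-1)(c-1)^{\gamma-1}}\le (c-1)^{-(\gamma-1)}$. This matches the claimed right-hand side, because for $n=1$ the exponent $\gamma+n(2s-1)-2s$ equals $\gamma-1$.

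For the inductive step, assume the statement for $n-1$ and consider $n\ge 2$. The admissibility hypothesis forces $\gamma\ge 2+(n-1)(1-2s)\ge 2$, since $s\le 1/2$. Using the strict monotonicity $a_n>a_{n-1}$, the innermost sum satisfies $\sum_{a_n\ge a_{n-1}+1}a_n^{-\gamma}\le \int_{a_{n-1}}^{\infty}x^{-\gamma}\,dx=\frac{1}{(\gamma-1)a_{n-1}^{\gamma-1}}\le a_{n-1}^{-(\gamma-1)}$. After this step the remaining sum is of the same form with $n$ replaced by $n-1$ and $\gamma$ replaced by $\gamma':=\gamma+2s-1$, since the exponent on $a_{n-1}$ becomes $2s+(\gamma-1)=\gamma'$. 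I would then verify the two bookkeeping identities: (i) the new admissibility condition $\gamma'\ge 2-(n-2)(2s-1)$ is algebraically equivalent to the old $\gamma\ge 2-(n-1)(2s-1)$, and (ii) the target exponent $\gamma'+(n-1)(2s-1)-2s$ equals $\gamma+n(2s-1)-2s$. Both follow immediately from $\gamma'-\gamma=2s-1$, so the induction hypothesis applies and delivers the desired bound with prefactor $1$.

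The main obstacle, such as it is, lies entirely in this last bookkeeping: one must confirm that the pair (condition on $\gamma$, target exponent) is preserved under the step $(n,\gamma)\mapsto(n-1,\gamma+2s-1)$, so that no extraneous constant survives the recursion. The appearance of the specific threshold $\gamma\ge 2-(n-1)(2s-1)$ is exactly what makes both the integral-comparison prefactor $(\gamma-1)^{-1}$ and the analogous prefactor at the final step of the induction bounded by $1$; once this invariance is checked, the induction closes cleanly on a single call per level, and no additional analytic input is needed beyond monotone integral comparison.
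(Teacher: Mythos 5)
Your proposal is correct and follows essentially the same route as the paper's proof: both integrate out the variables one at a time via the comparison $\sum_{a\ge t}a^{-\delta}\le \frac{1}{(\delta-1)(t-1)^{\delta-1}}$, with the observation that the running exponent $\gamma+j(2s-1)$ stays at least $2$ (so each prefactor $\frac{1}{\delta-1}$ is at most $1$); you merely package the iteration as an induction on $n$ with the substitution $\gamma\mapsto\gamma+2s-1$, while the paper carries the product of prefactors explicitly and bounds it by $1$ at the end. The bookkeeping identities you check are exactly the invariance the paper uses, so the two arguments coincide in substance.
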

\begin{proof}
For any $\delta>1$
and any integer $t\geq2$, we have
\begin{equation}\label{lem-beta}\sum_{n=t}^\infty \frac{1}{n^{\delta}}\leq\int_{t-1}^\infty x^{-\delta}dx=\frac{1}{(t-1)^{\delta-1}}\frac{1}{\delta-1}.\end{equation}
We will use this inequality repeatedly. Since $0<s\le 1/2$ and by the condition on $\gamma$, it follows that for all $0\le i\le n-1$,
\begin{equation}\label{1}\gamma+i(2s-1)\ge \gamma+(n-1)(2s-1)\ge 2.\end{equation}
Thus we have
\[\begin{split}
&\sum_{\substack{(a_1,\ldots,a_n)\in\mathbb N^n  \\ c\leq a_1<a_2<\cdots<a_n}}\frac{1}{(a_1\cdots a_{n-1})^{2s}a_n^{\gamma}}\\
=&\sum_{\substack{(a_1,\ldots,a_{n-1})\in\mathbb N^{n-1}  \\ c\leq a_1<a_2<\cdots<a_{n-1}}}\frac{1}{(a_1\cdots a_{n-2})^{2s}}\frac{1}{a_{n-1}^{2s}}\sum_{a_n=a_{n-1}+1}^\infty\frac{1}{a_n^{\gamma}}\\
\leq&\sum_{\substack{(a_1,\ldots,a_{n-1})\in\mathbb N^{n-1}  \\ c\leq a_1<a_2<\cdots<a_{n-1}}}\frac{1}{(a_1\cdots a_{n-2})^{2s}}\frac{1}{a_{n-1}^{\gamma+2s-1}}\frac{1}{\gamma-1}\\
=&\sum_{\substack{(a_1,\ldots,a_{n-2})\in\mathbb N^{n-2}  \\ c\leq a_1<a_2<\cdots<a_{n-2}}}\frac{1}{(a_1\cdots a_{n-3})^{2s}}
\times\frac{1}{a_{n-2}^{2s}}\sum_{a_{n-1}=a_{n-2}+1}^\infty\frac{1}{a_{n-1}^{\gamma+2s-1}}\frac{1}{\gamma-1}\\
\leq&\sum_{\substack{(a_1,\ldots,a_{n-2})\in\mathbb N^{n-2}  \\ c\leq a_1<a_2<\cdots<a_{n-2}}}\frac{1}{(a_1\cdots a_{n-3})^{2s}}\times\frac{1}{a_{n-2}^{\gamma+2(2s-1)}}\frac{1}{\gamma-1}\frac{1}{\gamma+2s-2}.\end{split}\]
Iterating this argument and by (\ref{1}), we arrive at the last step: \[\begin{split}\sum_{\substack{(a_1,\ldots,a_n)\in\mathbb N^n  \\
c\leq a_1<a_2<\cdots<a_n}}\frac{1}{(a_1\cdots a_{n-1})^{2s}a_n^{\gamma}}&\leq\sum_{a_1=c }^\infty\frac{1}{a_{1}^{\gamma+(n-1)(2s-1)}}
\prod_{i=0}^{n-2}\frac{1}{\gamma-1+i(2s-1)}.\end{split}\]
Noticing that $\gamma-1+i(2s-1)\ge 1$ for $0\leq i\leq n-1$ and using \eqref{lem-beta} again, one gets
\begin{align*}\sum_{\substack{(a_1,\ldots,a_n)\in\mathbb N^n  \\
c\leq a_1<a_2<\cdots<a_n}}\frac{1}{(a_1\cdots a_{n-1})^{2s}a_n^{\gamma}}&\leq \sum_{a_1=c }^\infty\frac{1}{a_{1}^{\gamma+(n-1)(2s-1)}}
\leq \frac{1}{(c-1)^{\gamma+n(2s-1)-2s}},
\end{align*}
as required.
\end{proof}

\subsection{Mass distribution principle}\label{mass-sec}
For $x\in[0,1]$ and $r>0$,
let $B(x,r)=\{y\in [0,1]\colon|x-y|\leq r\}$.
We cite the well known {\it the mass distribution principle}.
\begin{lem}[\cite{Fal97, Fal14}]\label{mass1}
Let $\Lambda$ be a Borel subset of $[0,1]$ and let $\mu$ be a Borel probability measure on $[0,1]$ with $\mu(\Lambda)=1$. If there exists $K>0$ such that for any $x\in \Lambda$,
\[\liminf_{r\to0}\frac{\log\mu(B(x,r))}{\log r}\geq K,\] then $\dim_{\rm H}\Lambda\geq K$.
\end{lem}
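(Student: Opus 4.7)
The plan is to reduce the inequality $\dim_{\rm H}\Lambda \geq K$ to showing that the $s$-dimensional Hausdorff measure $\mathcal{H}^s(\Lambda) > 0$ for every $s \in (0,K)$. Fixing such an $s$, I would aim to produce a strictly positive lower bound for $\mathcal{H}^s(\Lambda)$ and then let $s \uparrow K$.

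First I would convert the pointwise $\liminf$ hypothesis into a local measure estimate: for each $x \in \Lambda$, the inequality $\liminf_{r\to 0}\log\mu(B(x,r))/\log r > s$ yields some $r_0(x) > 0$ such that $\mu(B(x,r)) \leq r^s$ whenever $r \leq r_0(x)$. Because $r_0$ is in general not bounded below over $\Lambda$, I would stratify: set $\Lambda_n = \{x \in \Lambda \colon r_0(x) \geq 1/n\}$, so $\Lambda = \bigcup_{n \geq 1}\Lambda_n$. Since $\mu(\Lambda) = 1$, countable additivity produces an index $N$ with $\mu(\Lambda_N) > 0$, and on $\Lambda_N$ the radius of validity of the ball estimate is uniform.

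Next, given any countable cover $\{U_i\}$ of $\Lambda_N$ by sets of diameter at most $1/(2N)$, I would bound each $U_i$ meeting $\Lambda_N$ by picking a point $x_i \in U_i \cap \Lambda_N$ and noting that $U_i \subset B(x_i,|U_i|)$. Since $|U_i| \leq 1/(2N) \leq r_0(x_i)$, the hypothesis forces $\mu(U_i) \leq \mu(B(x_i,|U_i|)) \leq |U_i|^s$. Summing,
\[\mu(\Lambda_N) \leq \sum_i \mu(U_i) \leq \sum_i |U_i|^s,\]
and taking the infimum over such covers gives $\mathcal{H}^s_{1/(2N)}(\Lambda_N) \geq \mu(\Lambda_N) > 0$, hence $\mathcal{H}^s(\Lambda_N) > 0$, so $\dim_{\rm H}\Lambda \geq \dim_{\rm H}\Lambda_N \geq s$.

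The only delicate point is a shape mismatch: the hypothesis controls $\mu$ on balls, but an arbitrary cover may use arbitrary sets. The substitution $U_i \subset B(x_i,|U_i|)$ with $x_i \in U_i \cap \Lambda_N$ is the standard bridge, and apart from the stratification used to extract a set of positive $\mu$-measure on which the estimate is uniform, the remainder is a routine chain of measure-theoretic inequalities.
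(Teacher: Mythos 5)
Your argument is correct. The paper does not prove Lemma~\ref{mass1} at all --- it is quoted from Falconer's books --- and what you have written is precisely the standard proof found there: stratify $\Lambda$ into the sets $\Lambda_n$ on which the ball estimate $\mu(B(x,r))\le r^s$ holds uniformly for $r\le 1/n$, extract one of positive measure, and run the covering comparison $U_i\subset B(x_i,|U_i|)$ to get $\mathcal{H}^s_{1/(2N)}(\Lambda_N)\ge\mu(\Lambda_N)>0$. The only point worth a remark is that the $\Lambda_n$ need not be Borel for an arbitrary choice of $r_0(x)$; this is harmless, since countable subadditivity of the outer measure $\mu^*$ still yields some $N$ with $\mu^*(\Lambda_N)>0$, and the covering inequality goes through verbatim with $\mu^*$ in place of $\mu$.
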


\subsection{Construction of a Borel probability measure}\label{const-sec}
To obtain lower bounds on Hausdorff dimension,
we will apply Lemma~\ref{mass1} to Borel probability measures on $[0,1]$ and Borel subsets of $J$ given in the next lemma.
\begin{lem}\label{abstract}
Let $\{V_1,W_1,V_2,W_2,\ldots\}$ be a partition of $\mathbb N$ into bounded intervals
such that
 $\max V_k+1=\min W_k$ and $\max W_k+1=\min V_{k+1}$
for all $k\geq1$.
Let
$(L_n)_{n=1}^\infty$ be a sequence of
non-empty bounded intervals in $\mathbb N$, and
let
\[\Lambda=
\left\{
\begin{tabular}{l}
\!\!\!$x\in (0,1)\setminus\mathbb Q\colon$\!\!\!
 $a_n(x)\in L_n$ for $n\in V_k$, $k\in \mathbb N$ and \!\!\!\!\\
 \ \ \ \ \ \ \ \     $a_{n}(x)=a_{\max V_k }(x)+n-\max V_k$ for $n\in W_k$, $k\in \mathbb N$ \end{tabular}\!\!
\right\}.
\]
\begin{itemize}\item[(a)] There exists a Borel probability measure $\mu$
on $[0,1]$ such that:

\begin{itemize}
\item $\mu(\Lambda)=1$;
\item for all $n\in\mathbb N$ and all $(a_1,\ldots,a_n)\in\mathbb N^n$ with
$I_n(a_1,\ldots,a_n)\cap \Lambda\neq\emptyset$,
\[\mu(I_n(a_1,\ldots, a_n))=\begin{cases}\displaystyle{\prod_{i=1}^n
\frac{1}{    \#L_i  }  }&\text{ for }n\in V_1,\\
\displaystyle{\prod_{m=1}^k
\prod_{i\in V_m}\frac{1}{\#L_i   }}&\text{ for }
n\in W_k,\
k\geq1,\\
\displaystyle{\begin{split}&\prod_{m=1}^k\prod_{i\in V_m}\frac{1}{\#L_i  }\prod_{i=\min V_{k+1} }^{n}\frac{1}{\#L_i }\end{split}}
&\text{ for }
n\in V_{k+1},\ k\geq1.
\end{cases}\]
\end{itemize}
\item[(b)] If $L_n=[(2n)^t,(2n+1)^t)$ for all $n\geq1$ with $t\in\mathbb N_{\geq2}$, then $\Lambda\subset J$.
\end{itemize}
\end{lem}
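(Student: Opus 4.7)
My plan for Part~(a) is to define $\mu$ on the semi-ring of fundamental intervals directly by the displayed formula (assigning $0$ to any cylinder disjoint from $\Lambda$), verify the cylinder consistency relation
\[\mu(I_n(a_1,\ldots,a_n))=\sum_{a_{n+1}\in\mathbb N}\mu(I_{n+1}(a_1,\ldots,a_n,a_{n+1})),\]
and then appeal to Carath\'eodory's extension theorem to obtain a Borel probability measure on $[0,1]$. Equivalently, one can form the product measure $\mathbb P=\bigotimes_{i\in\bigcup_k V_k}\mathrm{Unif}(L_i)$ on the space of admissible free digits, extend each such sequence to a full sequence $(a_n)_{n\ge1}$ via the AP rule on each $W_k$, and let $\mu$ be the push-forward of $\mathbb P$ under the resulting map into $[0,1]$; this construction automatically concentrates $\mu$ on $\Lambda$.

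The consistency check splits cleanly into cases depending on whether $n+1$ lies in a free block or in an AP block. If $n+1\in W_m$, the AP rule forces $a_{n+1}$ to take exactly one value compatible with $\Lambda$, and this unique child inherits the parent's full mass; if $n+1\in V_m$, then $a_{n+1}$ ranges over the $\#L_{n+1}$ admissible values and each child carries mass $\mu(I_n)/\#L_{n+1}$. To deduce $\mu(\Lambda)=1$, I let $\mathcal F_n$ be the union of all level-$n$ fundamental intervals meeting $\Lambda$; a telescoping of the formula gives $\mu(\mathcal F_n)=1$ for every $n$, while $\bigcap_{n}\mathcal F_n$ differs from $\Lambda$ only by the countable $\mu$-null set of rationals in $[0,1]$.

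For Part~(b), I verify $a_n(x)<a_{n+1}(x)$ for all $n\ge1$ and $x\in\Lambda$ by case analysis on the block membership of $n$ and $n+1$. Three of the four cases are immediate: within a single $V_k$-block, $a_n(x)<(2n+1)^t\le(2n+2)^t\le a_{n+1}(x)$; within or entering a $W_k$-block, $a_{n+1}(x)=a_n(x)+1$. The only case demanding a short calculation is the transition $n=\max W_k$, $n+1=\min V_{k+1}$: writing $m=\max V_k$ and $\ell=|W_k|$, one has $a_n(x)=a_m(x)+\ell<(2m+1)^t+\ell$ and $a_{n+1}(x)\ge(2m+2\ell+2)^t$, and the elementary estimate $(2m+2\ell+2)^t-(2m+1)^t\ge t(2m+1)^{t-1}(2\ell+1)$, valid for $t\ge2$ and $m\ge1$, comfortably dominates $\ell$.

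I expect the only real delicacy to be this last transition in Part~(b): strict monotonicity at the boundary $W_k\to V_{k+1}$ requires $\min L_{\min V_{k+1}}$ to exceed the largest attainable value in $W_k$, and the specific choice $L_n=[(2n)^t,(2n+1)^t)$ with $t\ge 2$ is calibrated precisely so that this inequality holds for \emph{every} admissible partition $\{V_k,W_k\}$ of $\mathbb N$. The rest of the argument is a standard cylinder-measure construction, with the only twist being that AP positions carry no measure-theoretic freedom and therefore contribute no factor of $1/\#L$ to $\mu(I_n)$.
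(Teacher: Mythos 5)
Your proposal is correct and follows essentially the same route as the paper: define $\mu$ on cylinders by the stated formula, check the one-step consistency (no freedom and hence no $1/\#L$ factor at AP positions, uniform splitting at free positions), extend by Kolmogorov/Carath\'eodory, and for (b) reduce strict monotonicity to the single boundary transition $\max W_k\to\min V_{k+1}$, which the calibration $L_n=[(2n)^t,(2n+1)^t)$ with $t\ge2$ handles via the same elementary estimate $(2\min W_k-1)^t+\#W_k\le(2\min V_{k+1})^t$ that the paper uses. No gaps worth noting.
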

\begin{proof}
Taking a sequence $(\mu_n)_{n=1}^\infty$ of Borel probability measures on $[0,1]$
such that
for all $n\in\mathbb N$ and all $(a_1,\ldots,a_n)\in\mathbb N^n$ with
$I_n(a_1,\ldots,a_n)\cap \Lambda\neq\emptyset$, we set
\[\mu_n(I_n(a_1,\ldots, a_n))=\begin{cases}\displaystyle{\prod_{i=1}^n
\frac{1}{    \#L_i  }  }&\text{ for }n\in V_1,\\
\displaystyle{\prod_{m=1}^k
\prod_{i\in V_m}\frac{1}{\#L_i   }}&\text{ for }
n\in W_k,\
k\geq1,\\
\displaystyle{\begin{split}&\prod_{m=1}^k\prod_{i\in V_m}\frac{1}{\#L_i  }\prod_{i=\min V_{k+1} }^{n}\frac{1}{\#L_i }\end{split}}
&\text{ for }
n\in V_{k+1},\ k\geq1.
\end{cases}\]
By definition, for all $n,q\in\mathbb N$ with $q>n$ and all $(a_1,\ldots,a_n)\in\mathbb N^n$, we have
\begin{equation}\label{compatible}\mu_n(I_n(a_1,\ldots,a_n))=\mu_q(I_n(a_1,\ldots,a_n)).\end{equation}
Each $\mu_n$ can be viewed as a Borel probability measure on $\mathbb N^n$,
and \eqref{compatible} can be viewed as
Kolmogorov's compatibility condition on the sequence $(\mu_n)_{n=1}^\infty$.
By  Kolmogorov's extension theorem,
 there is a unique Borel probability measure $\mu$ on $[0,1]$ such that for all $n\geq1$ and all $(a_1,\ldots,a_n)\in\mathbb N^n$,
 \[\mu(I_n(a_1,\ldots, a_n))=\mu_n(I_n(a_1,\ldots, a_n)).\] Hence
 the formula in (a) holds. If $n\geq1$ and $I_n(a_1,\ldots,a_n)\cap \Lambda=\emptyset$ then $\mu_n(I_n(a_1,\ldots, a_n))=0$. This yields $\mu(\Lambda)=1$ as required in (a).

 To prove (b), it suffices to show that $(a_n(x))_{n=1}^\infty$ is strictly increasing for any $x\in \Lambda$.
 Since $\max L_n<\min L_{n+1}$ for all $n\geq1$,
it suffices to show that \[a_{\min W_k}(x)+\#W_k-1< a_{ \min V_{k+1}}(x)\ \text{ for all }k\geq1.\]
Note that $\max V_k+1=\min W_k$ and $\min W_k+\#W_k=\min V_{k+1}$. 
Since$$a_{\min W_k}(x)=a_{\max V_k}+1<(2\max V_k+1)^t+1, \ \ \ (2\min V_{k+1} )^t\leq a_{\min V_{k+1} }(x),$$
 we obtain
\begin{equation*}\begin{split}a_{\min W_k}(x)+\# W_k-1&<(2\min W_k-1)^t+\# W_k-1\\ &\leq(2\min W_k+2\# W_k)^t\\
&= (2\min V_{k+1})^t \leq
a_{\min V_{k+1}}(x)\end{split}\end{equation*}
as required.
\end{proof}

\section{Upper bounds on Hausdorff dimension}
In this section we establish upper bounds on the Hausdorff dimension of $F$-sets and $G$-sets.
\subsection{Upper bounds for $F$-sets }\label{upper-sec}
Recall that \[F((\nu_n)_{n=1}^\infty)=\{x\in J\colon a_{n}(x),\ldots,a_{n+\nu_n-1}(x)\ {\text{is an AP for infinitely many}}\ n\in \mathbb{N}\}.\]
The upper bound for $F$-sets follows from the next statement.

\begin{pro}\label{upper-prop}
Let $(\nu_n)_{n=1}^\infty$ be an increasing sequence of  integers in $\mathbb N$.
 Then we have \[\dim_{\rm H} F((\nu_n)_{n=1}^\infty)\leq\frac{1}{2(1+\alpha)}, \ {\text{where}} \ \alpha=\liminf_{n\to\infty}\frac{\nu_n}{n}.\]
 \end{pro}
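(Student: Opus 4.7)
The plan is a covering argument starting from the limsup decomposition
\[
F((\nu_n)_{n=1}^\infty)=\bigcap_{N\ge 1}\bigcup_{n\ge N}A_n,\quad A_n:=\{x\in J:(a_n(x),\ldots,a_{n+\nu_n-1}(x))\text{ is an AP}\},
\]
covering each $A_n$ by $(n+\nu_n-1)$-th fundamental intervals indexed by the strictly increasing prefix $1\le a_1<\cdots<a_{n-1}<a_n$ together with the common difference $M\ge 1$. Since $F\subseteq J$ and $\dim_{\rm H}J=1/2$ by Ramharter \cite{R85}, the case $\alpha=0$ is trivial, so I would focus on $0<\alpha\le\infty$.

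Fix any $s>1/(2(1+\alpha))$. Bounding $|I_{n+\nu_n-1}(\cdots)|^s$ via Lemma \ref{qn} and summing over $M\ge 1$ with Lemma \ref{lem-AP} (applied with $\ell=\nu_n-1$, valid once $2s(\nu_n-1)>1$) produces a factor of the form $C_{n,s}\,a_n^{-\gamma_n}$ with $\gamma_n:=2s(\nu_n-1)-1$. To handle the remaining sum over $1\le a_1<\cdots<a_n$ of $(a_1\cdots a_{n-1})^{-2s}a_n^{-\gamma_n}$, I would invoke the sharper intermediate estimate from within the proof of Lemma \ref{descend}, namely
\[
\sum_{1\le a_1<\cdots<a_n}\frac{1}{(a_1\cdots a_{n-1})^{2s}a_n^{\gamma_n}}\le\left(\prod_{j=0}^{n-2}\frac{1}{\gamma_n-1-j(1-2s)}\right)\sum_{a_1\ge 1}\frac{1}{a_1^{\gamma_n+(n-1)(2s-1)}},
\]
which is valid once each denominator is positive and the last series converges. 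Both conditions follow from $2s(\nu_n+n-2)>n+1$, and by $\liminf_n\nu_n/n=\alpha$ together with $2s>1/(1+\alpha)$ this holds for all $n$ sufficiently large.

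Let $S_n(s)$ denote the resulting total bound. The product $\prod_{j=0}^{n-2}(\gamma_n-1-j(1-2s))^{-1}$ is, up to scaling, a reciprocal falling factorial: writing $A=\gamma_n-1$ and $b=1-2s>0$, it equals $b^{-(n-1)}\Gamma(A/b-n+2)/\Gamma(A/b+1)$. For $s>1/(2(1+\alpha))$ strictly, a routine computation using $\nu_n/n\ge\alpha-\epsilon$ shows $A/b\ge(1+r)n+O(1)$ for some $r=r(s,\epsilon)>0$, and Stirling then yields factorial-type decay in $n$, giving $\sum_n S_n(s)<\infty$. Since $a_i\ge i$ on $J$ forces the diameters of the chosen cover elements of $A_n$ to tend to $0$ as $n\to\infty$, one obtains $\mathcal{H}^s(F((\nu_n)_{n=1}^\infty))\le\lim_{N\to\infty}\sum_{n\ge N}S_n(s)=0$, hence $\dim_{\rm H}F((\nu_n)_{n=1}^\infty)\le s$. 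Sending $s\downarrow 1/(2(1+\alpha))$ completes the case $\alpha<\infty$; when $\alpha=\infty$, the same scheme applies to any $s>0$, since $\nu_n/n\to\infty$ makes every convergence condition easier.

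The main obstacle is the bookkeeping that extracts the exact critical exponent $1/(2(1+\alpha))$. Crucially, the final statement of Lemma \ref{descend} (which, with $c=2$, merely bounds the product by $1$) is insufficient and would only recover the trivial bound $\dim_{\rm H}F\le 1/2$; one must retain the explicit product $\prod(\gamma_n-1-j(1-2s))^{-1}$ that appears inside the proof of that lemma and analyze it via the Gamma function. The decisive balance is the linear inequality $2s(\nu_n+n-2)>n+1$, which captures the tradeoff between the AP length $\nu_n$ (the gain from Lemma \ref{lem-AP}) and the length $n-1$ of the strictly increasing prefix (the cost from iterated descent); equality corresponds precisely to $s=1/(2(1+\alpha))$.
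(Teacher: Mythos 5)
Your argument is correct and follows the same overall strategy as the paper's proof of Proposition~\ref{upper-prop}: cover by fundamental intervals of order $n+\nu_n-1$ recording the strictly increasing prefix together with the AP block, sum over the common difference via Lemma~\ref{lem-AP}, and then run the iterated descent of Lemma~\ref{descend} over the prefix; the threshold $s=1/(2(1+\alpha))$ emerges from the same balance $2s(\nu_n+n)\gtrsim n$ in both treatments. The one genuine divergence is the endgame. You keep the prefix starting from $a_1\ge 1$ and extract summability over $n$ from the accumulated product $\prod_{i=0}^{n-2}\bigl(\gamma_n-1+i(2s-1)\bigr)^{-1}$ inside the proof of Lemma~\ref{descend}; this is valid, and in fact Stirling is overkill --- under your condition $2s(\nu_n+n-2)>n+1$ (with margin $\gtrsim n$, which also keeps the $\sum_{a_1\ge1}a_1^{-\delta_n}$ factor bounded) every factor of the product is at least of order $n$, so the product is already $O\bigl((cn)^{-(n-1)}\bigr)$. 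The paper instead first applies the Gauss map, which preserves Hausdorff dimension, to reduce to the subset $F'$ with $a_1\ge 3$; the final statement of Lemma~\ref{descend} with $c=3$ then gives $(c-1)^{-(\gamma+n(2s-1)-2s)}=2^{-(2s(\nu_n-1)+(2s-1)(n-1)-2)}\le 2^{-\delta n}$, i.e.\ geometric decay, with no need to reopen the lemma's proof. So your observation that the lemma's stated conclusion is insufficient is accurate only for $c\le 2$; the shift trick makes it sufficient as stated. Both routes yield the same exponent: yours avoids the preliminary reduction at the cost of a more delicate analysis of the constants, the paper's does the opposite.
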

\begin{proof}
Since the Gauss map $T$ is locally bi-Lispchtiz, one has $\dim_{\rm H} TE=\dim_{\rm H} E$ for any subset $E$ in $[0,1].$ Thus, we can pose extra condition on the first partial quotients on the points in $F((\nu_n)_{n=1}^\infty)$ without changing its dimension.

More precisely, for all $i\in\mathbb N$ we have
\[T^i\left(F((\nu_n)_{n=1}^\infty)
\right)\subset F((\nu_{i+n})_{n=1}^\infty)\cap\bigcup_{a_1=i+1}^\infty I_1(a_1),\]
where the second intersection is because the partial quotients of $x\in F((\nu_n)_{n=1}^\infty)$ is strictly increasing.
Then up to shifting the indices it suffices to show
\[\dim_{\rm H} F'\le \frac{1}{2(1+\alpha)},\] where
\[F'=F((\nu_n)_{n=1}^\infty)\cap \bigcup_{a_1=3}^\infty I_1(a_1).\]

Let $s>1/(2(1+\alpha))$. The definition of $\alpha$ implies that for all $n$ large enough, $$
2s\cdot \frac{\nu_n+n}{n}>1+\epsilon, \ {\text{for some}}\ \epsilon>0.
$$
Thus we can take
 $\delta\in(0,2s(\alpha+1)-1)$ small
 such that for some $N>1/\delta$,
\begin{equation}\label{ell-eq}2s(\nu_n-1) +(2s-1)(n-1)-2\geq\max\{\delta n,2\}\ \text{ for all }n\geq N.\end{equation}
For integers $n\geq3$ and $M\geq1$,
let \[D^n_M=\{(b_{1},\ldots, b_{n})\in\mathbb N^{n}\colon b_{i+1}-b_i=M\ \text{ for }i=1,\ldots, n-1 \},\]
i.e., APs with length $n$ and common difference $M$.

Consider the covering of $F'$ by fundamental intervals that captures AP segments in the sequences of the partial quotients of points in $F'$:  \[F'\subset\bigcup_{n\geq N  }\bigcup_{\substack{(a_1,\ldots, a_{n-1})\in\mathbb N^{n-1}\\ 3\leq a_1<\cdots<a_{n-1}}}\bigcup_{M=1}^{\infty}
\bigcup_{\substack{(a_n,\ldots,
a_{n+\nu_n-1})\in D^{\nu_n}_M \\ a_{n-1}<a_n }}I_{n+\nu_n-1}
(a_1,\ldots,a_{n-1},a_n,\ldots,a_{n+\nu_n-1}).\]
Using the second inequality in \eqref{qn-2}, we have
\[\begin{split}&\sum_{\substack{(a_1,\ldots, a_{n-1})\in\mathbb N^{n-1}\\ 3\leq a_1<\cdots<a_{n-1}}}\sum_{M=1}^{\infty}
\sum_{
\substack{(a_n,\ldots,
a_{n+\nu_n-1})\in D^{\nu_n}_ M \\ a_{n-1}<a_n } }|I_{n+\nu_n-1}(a_1,\ldots,a_{n-1},a_{n},\ldots,a_{n+\nu_n-1})|^s\\
&\leq\!\!\!\!\!\!\!\sum_{\substack{(a_1,\ldots, a_{n-1})\in\mathbb N^{n-1}\\ 3\leq a_1<\cdots<a_{n-1}}}
\frac{1}{(a_1\cdots a_{n-1})^{2s}}
\sum_{a_{n}= a_{n-1}+1  }^\infty\sum_{M=1}^{\infty} \left[\prod_{i=0}^{\nu_n-1}(a_n+iM) \right]^{-2s}\!\!\!\!\!\!\!.
\end{split}\]
On the contribution from the AP segments from position $n$ to $n+\nu_n-1$,
since $\nu_n\geq3$ and $2s(\nu_n-1)\geq2$ from \eqref{ell-eq},   Lemma \ref{lem-AP} gives \begin{equation}\label{upper-2a}\sum_{M=1}^{\infty}\left[\prod_{i=0}^{\nu_n-1}(a_n+iM) \right]^{-2s}
\le \frac{1}{a_{n}^{2s(\nu_n-1) -1}}\frac{2s(\nu_n-1)}{2s(\nu_n-1)-1}\le \frac{2}{a_{n}^{2s(\nu_n-1) -1}}.\end{equation}
Let $c=3$ and $\gamma=2s(\nu_n-1)-1$ {\color{red}}which satisfies the condition required in Lemma \ref{descend} by \eqref{ell-eq}. Thus plugging the inequality (\ref{upper-2a}) into the previous inequality, and then applying Lemma~\ref{descend} we get
\[
\begin{split}
\sum_{\substack{(a_1,\ldots, a_{n-1})\in\mathbb N^{n-1}\\ 3\leq a_1<\cdots<a_{n-1}}}&\sum_{M=1}^{\infty}
\sum_{
\substack{(a_n,\ldots,
a_{n+\nu_n-1})\in D^{\nu_n}_M \\ a_{n-1}<a_n } }|I_{n+\nu_n-1}(a_1,\ldots,a_{n-1},a_{n},\ldots,a_{n+\nu_n-1})|^s\\
&\leq \!\!\!\!\!\!\!\sum_{ \substack{(a_1,\ldots, a_{n-1}, a_n)\in\mathbb N^{n}\\ 3\leq a_1<\cdots<a_{n-1}<a_n}  }
\frac{2}{(a_1\cdots a_{n-1})^{2s}a_{n}^{2s(\nu_n-1) -1}}\leq \frac{2}{2^{2s(\nu_n-1) +(2s-1)(n-1)-2}}.
\end{split}
\]
Since
 $2s(\nu_n-1) +(2s-1)(n-1)-2\geq\delta n$ by
 \eqref{ell-eq}, we have
\[\frac{2}{2^{2s(\nu_n-1)+(2s-1)(n-1)-2}}\le \frac{2}{2^{\delta n}},\]
and hence
\[\begin{split}\sum_{n= N}^\infty\sum_{\substack{(a_1,\ldots, a_{n-1})\in\mathbb N^n\\ 3\leq a_1<\cdots<a_{n-1}}}\sum_{M=1}^{\infty}
&\sum_{   \substack{(a_n,\ldots,
a_{n+\nu_n-1})\in D^{\nu_n}_M \\ a_{n-1}<a_n } }|I_{n+\nu_n-1}(a_1,\ldots,a_{n-1},a_{n},\ldots,a_{n+\nu_n-1})|^s\\
&\leq \sum_{n=N}^{\infty}\frac{2}{2^{\delta n}}<\infty.
\end{split}\]
Since the supremum of the diameters of the fundamental intervals in the covering of $F'$ converges to $0$ as $N\to\infty$,
we obtain $\dim_{\rm H} F'\le s.$
By the arbitrariness of $s>1/(2(1+\alpha))$, we obtain
$\dim_{\rm H} F'\le 1/(2(1+\alpha))$ as required.\end{proof}

\subsection{Upper bounds for $G$-sets}\label{upper-G} Recall that
\[ G((\sigma_n)_{n=1}^\infty)=\{x\in J\colon a_{\sigma_n}(x),\ldots,a_{\sigma_n+n-1}(x) \ {\text{is an AP for all}}\ n \ {\text{large}}\}.\]
The upper bound for $G$-sets follows from the next statement.
A proof is similar in spirit to that of Proposition~\ref{upper-prop}, but technically more involved. We will consider coverings that capture alternate concatenations of free and AP segments in the
sequences of partial quotients of points in $G$-sets.
\begin{pro}\label{upper-prop-new}
Let $(\sigma_n)_{n=1}^\infty$ be a strictly increasing sequence of positive integers with $\sigma_{n+1}\ge \sigma_n+n$ for all $n\ge 1$ and $\lim_{n\to\infty}\frac{\sigma_{n+1}-\sigma_n}{n}:=\beta$
for some $\beta\ge 1.$
 Then we have \[\dim_{\rm H} G((\sigma_n)_{n=1}^\infty)\leq\frac{\beta-1}{2\beta }. \]
 \end{pro}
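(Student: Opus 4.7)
The plan is to mimic the proof of Proposition~\ref{upper-prop}, now running an inductive estimate over the infinitely many AP-blocks prescribed by $(\sigma_n)$. As before, I would first use that the Gauss map $T$ is locally bi-Lipschitz and hence preserves Hausdorff dimension in order to shift indices and assume $a_1(x)\ge 3$ throughout. Fix $s>(\beta-1)/(2\beta)$, pick a threshold $N_0$ beyond which the AP condition holds, and cover the corresponding subset of $G((\sigma_n)_{n=1}^\infty)$ by fundamental intervals of level $L=\sigma_M+M-1$ for large $M$; the target is to show that the resulting $s$-dimensional covering sum tends to $0$ as $M\to\infty$.

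Decompose positions $\{1,\ldots,L\}$ into an initial segment $[1,\sigma_{N_0}-1]$ and blocks $B_n$ for $N_0\le n\le M$, where $B_n$ consists of the prescribed AP at $\sigma_n,\ldots,\sigma_n+n-1$ followed (for $n<M$) by the free segment at $\sigma_n+n,\ldots,\sigma_{n+1}-1$ of length $T_n=\sigma_{n+1}-\sigma_n-n$. Let $\Psi_n(c)$ denote the partial covering sum arising from positions $\sigma_n,\ldots,L$ under the side condition $a_{\sigma_n}\ge c$, and prove by downward induction on $n$ the bound $\Psi_n(c)\le K_n\,c^{-\gamma_n}$ with
\[
\gamma_n=\gamma_{n+1}+T_n(2s-1)+2s(n-1)-2,\qquad \gamma_M=2s(M-1)-2.
\]
Each inductive step is carried out as follows: expand $\Psi_n(c)$ as a sum over the first AP digit $a\ge c$, the common difference $M'\ge 1$, the $T_n$ strictly-increasing free digits (all exceeding the AP's last digit $d=a+(n-1)M'$), and the first digit of the next AP. Apply Lemma~\ref{descend} to the last $T_n+1$ of these to produce the factor $d^{-(\gamma_{n+1}+T_n(2s-1))}$; then use the lossy but sufficient bound $d^{-\mu_n}\le a^{-\mu_n}$ (valid whenever $\mu_n:=\gamma_{n+1}+T_n(2s-1)\ge 0$); then sum the common difference via Lemma~\ref{lem-AP} to obtain the factor $a^{-(2s(n-1)-1)}$; and finally integrate over $a\ge c$.

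Since $T_n/n\to\beta-1$, one reads off $\gamma_n-\gamma_{n+1}\sim n\bigl((2s-1)\beta+1\bigr)$ for large $n$, and $(2s-1)\beta+1>0$ is exactly the assumption $s>(\beta-1)/(2\beta)$. Hence $\gamma_n$ increases monotonically as $n$ decreases, with $\gamma_{N_0}\sim \tfrac12\bigl((2s-1)\beta+1\bigr)M^2$. Each step contributes only a uniformly bounded constant from Lemma~\ref{lem-AP}, so $K_{N_0}$ grows at most exponentially in $M$; combined with $c\ge 3$, the bound $\Psi_{N_0}(c)\le C^M\,3^{-\gamma_{N_0}}$ tends to $0$. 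The contribution of the initial segment, which splices onto $\Psi_{N_0}$ through one final application of Lemma~\ref{descend}, adds only an exponent $-(\sigma_{N_0}-1)(2s-1)$ to the $3$-base power, still dominated by the quadratic $\gamma_{N_0}$. Hence $\dim_{\rm H}G((\sigma_n)_{n=1}^\infty)\le s$, and sending $s\searrow(\beta-1)/(2\beta)$ concludes.

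The chief technical obstacle is that both the hypothesis of Lemma~\ref{descend} and the nonnegativity of $\mu_n$ needed for the lossy step require $\gamma_{n+1}\ge 1+T_n(1-2s)$ at every stage of the induction. The tightest instance is $n=M-1$, where one must have $\gamma_M\approx 2sM\ge (\beta-1)(1-2s)M\approx T_{M-1}(1-2s)$; this inequality reduces precisely to $s\ge(\beta-1)/(2\beta)$, matching the hypothesis of the proposition. For all smaller $n$, the monotonicity of $\gamma_n$ as $n$ decreases then makes the condition automatic, so the induction runs unobstructed.
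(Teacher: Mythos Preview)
Your approach is correct and rests on the same covering and the same two lemmas as the paper, but the bookkeeping differs. The paper groups each \emph{free} segment with the \emph{following} AP block and shows that this combined block contributes a multiplicative factor at most $1$ to the covering sum; hence $\mathscr{H}_n^s\le\mathscr{H}_{n-1}^s$ and the sum stays bounded as the cover is refined, which already gives $\dim_{\rm H}G_j\le s$. Your grouping is the reverse (AP then free), so you must carry the exponent $\gamma_n$ through a downward induction and invoke the lossy bound $d^{-\mu_n}\le a^{-\mu_n}$; this works, but the paper's ordering avoids both the lossy step and the per-step verification of $\mu_n\ge 0$, reducing the Lemma~\ref{descend} hypothesis to the single inequality $(2s-1)(\sigma_{n+1}-\sigma_n)+n-2\ge\delta$, valid for all $n\ge n_0$ by the choice of $s$. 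Two minor points on your write-up: (i) in the inductive step Lemma~\ref{descend} is applied to $T_n$ variables, not $T_n+1$---the sum over the next AP's first digit is already inside $\Psi_{n+1}$, and the inductive bound replaces it by $b_{T_n}^{-\gamma_{n+1}}$, after which the last free digit carries exponent $2s+\gamma_{n+1}$; your stated recursion for $\gamma_n$ is nonetheless correct. (ii) Your closing claim that monotonicity of $\gamma_n$ alone secures the hypothesis at smaller $n$ is incomplete: the condition is $\gamma_{n+1}\ge 1+T_n(1-2s)$ and $T_n$ varies too. What you actually need is that $\mu_n=\gamma_{n+1}-T_n(1-2s)$ is decreasing in $n$, and indeed $\mu_n-\mu_{n+1}=T_n(2s-1)+2sn-2\sim n((2s-1)\beta+1)>0$, so once $N_0$ is large the tightest case is $n=M-1$ as you say.
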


 \begin{proof} 
Let $(\beta-1)/(2\beta)<s<1/2$. By the definition of $\beta$, we have
\[\liminf_{n\to\infty}\frac{2s(\sigma_{n+1}-\sigma_{n})-(\sigma_{n+1}-\sigma_{n}-n)}{n}>0.\]
Let $\delta>1$. Choose $n_0\in \mathbb{N}$ large such that $sn\ge 2$ and
\begin{equation}\label{eq-c}(2s-1)(\sigma_{n+1}-\sigma_{n})+n-2\geq\delta, \  \ \text{ for }n\ge n_0.\end{equation}

 It is clear that \begin{align*}
   G((\sigma_n)_{n=1}^\infty)&\subset \bigcup_{j\ge n_0}\bigcup_{a_1<\cdots<a_{\sigma_j+j-1}}\{x\in I_{\sigma_j+j-1}(a_1,\cdots, a_{\sigma_j+j-1})\cap J: \\ & \qquad \qquad \qquad \qquad a_{\sigma_n}(x),\ldots,a_{\sigma_n+n-1}(x)
    \ {\text{is an AP for all}}\ n>j\}\\
    &:=\bigcup_{j\ge n_0}\bigcup_{a_1<\cdots<a_{\sigma_j+j-1}}G_j.
 \end{align*}  
 By the countable stability of Hausdorff dimension,
  it suffices to show that \begin{equation}\label{reduced}\dim_{\rm H} G_j\leq s \ \text{ for all }j\geq n_0.\end{equation}
  Since $s>(\beta-1)/(2\beta)$ is arbitrary, this will imply the desired upper bound.

As in the proof of Proposition~\ref{upper-prop},
for integers $n\geq n_0$ and $M\geq1$
let \[D^n_M=\{(b_{1},\ldots, b_{n})\in\mathbb N^{n}\colon b_{i+1}-b_i=M\ \text{ for }i=1,\ldots, n-1 \}.\]
For $n\in\mathbb N$ define
\[\mathbb N^n_{>}=\{(a_1,\ldots,a_n)\in\mathbb N^n\colon a_1<\cdots<a_n\}.\]
For an integer $n\geq n_0$, put
\[\ell_n=\sigma_n-(\sigma_{n-1}+n-1)\ge 0,\]
which is the gap between two APs in the digit sequence of $x\in G((\sigma_n)_{n=1}^\infty)$.

Recall the positions of the free segments and AP segments in $G_j$: for all $n>j$
 $$(a_{\sigma_{n-1}+n-1 },\ldots,a_{\sigma_{n}-1})\ {\text{free}}; \ \ (a_{\sigma_{n}},\cdots, a_{\sigma_{n}+n-1})={\text{AP}}.$$
 Consider the covering of $G_j$ by $(\sigma_{n+1}+n)$-th fundamental intervals
$I_{\sigma_{n+1}+n}(a_1,\ldots,a_{\sigma_{n+1}+n})$ such that
$(a_{\sigma_k},\ldots,
a_{\sigma_k+k-1})\in D^k_{M_k}$, $M_k\geq1$ for $j< k\leq n+1$:
\[\begin{split}G_j\subset &\bigcup_{\substack{ (a_{\sigma_{j}+j },\ldots,a_{\sigma_{j+1}-1})\in\mathbb N^{\ell_{j+1} }_{>} \\ a_{\sigma_{j}+j }>a_{\sigma_{j}+j-1 }  } }  \bigcup_{M_{j+1}=1}^{\infty}\bigcup_{\substack{
(a_{\sigma_{j+1}},\dots,
a_{\sigma_{j+1}+j})\in D^{j+1}_{M_{j+1}}\\ a_{\sigma_{j+1}}>a_{\sigma_{j+1}-1}}} \cdots\cdots
\\
&
\bigcup_{\substack{
(a_{\sigma_n+n},\ldots,a_{\sigma_{n+1}-1})\in\mathbb N_{ >}^{\ell_{n+1} } \\ a_{\sigma_n+n}>a_{\sigma_n+n-1}  } }
\bigcup_{M_{n+1}=1 }^{\infty}\bigcup_{\substack{
 (a_{\sigma_{n+1}},\ldots,
a_{\sigma_{n+1}+n })\in D^{n+1}_{M_{n+1}} \\ a_{\sigma_{n+1}}>a_{\sigma_{n+1}-1}}  }I_{\sigma_{n+1}+n}(a_1,\ldots,a_{\sigma_{n+1}+n}).\end{split}\]
Correspondingly
we set
\[\begin{split}\mathscr{H}_n^s&=\sum_{\substack{ (a_{\sigma_{j}+j},\ldots,a_{\sigma_{j+1}-1})\in\mathbb N^{\ell_{j+1} }_{>}\\a_{\sigma_{j}+j }>a_{\sigma_{j}+j-1}}
    }\sum_{M_{j+1}=1}^{\infty}\sum_{a_{\sigma_{j+1}}>a_{\sigma_{j+1}-1}}\sum_{
(a_{\sigma_{j+1}},\ldots,
a_{\sigma_{j+1}+j})\in D^{j+1}_{M_{j+1}}}
\cdots  \cdots
 \\
&\times
\sum_{\substack{ (a_{\sigma_{n}+n},\ldots,a_{\sigma_{n+1}-1})\in\mathbb N^{\ell_{n+1} }_{>}\\a_{\sigma_{n}+n }>a_{\sigma_{n}+n-1}}
    }\sum_{M_{n+1}=1}^{\infty}\sum_{a_{\sigma_{n+1}}>a_{\sigma_{n+1}-1}}\sum_{
(a_{\sigma_{n+1}},\ldots,
a_{\sigma_{n+1}+n})\in D^{n+1}_{M_{n+1}}}
\frac{1}{(a_1\cdots a_{\sigma_{n+1}+n})^{2s}}.
\end{split}\]
From the length estimation of a cylinder \eqref{qn-2},
we will obtain $\dim_{\rm H}G_j\leq s$ once one can show $\lim\inf_{n\to \infty}\mathscr{H}_n^s$ is finite.

 We estimate $\mathscr{H}_n^s$ from above. At first, since $sn\ge 2$ for all $n\ge n_0$, one has 
   \[\frac{1}{sn-1}\frac{sn}{2sn-1}<1, \ {\text{for all}}\ n\ge n_0.\]
For each $n\ge j$, write 
\[\begin{split}
I:=&\!\!\!\!\!\!\!\sum_{\substack{ (a_{\sigma_{n}+n},\ldots,a_{\sigma_{n+1}-1})\in\mathbb N^{\ell_{n+1} }_{>}\\a_{\sigma_{n}+n }>a_{\sigma_{n}+n-1}}
    }\sum_{
a_{\sigma_{n+1}}=a_{\sigma_{n+1}-1}+1
}^\infty\sum_{M=1 }^{\infty}\\
 &\qquad \qquad\times\!\!\!\!\!\!\!\sum_{
 (a_{\sigma_{n+1}},\ldots,
a_{\sigma_{n+1}+n})\in D^{n+1}_{M}   }\frac{1}{(a_{\sigma_n+n}\cdots a_{\sigma_{n+1}-1}a_{\sigma_{n+1}}\cdots a_{\sigma_{n+1}+n})^{2s}}\\
\leq& \sum_{\substack{ (a_{\sigma_{n}+n},\ldots,a_{\sigma_{n+1}-1})\in\mathbb N^{\ell_{n+1} }_{>}\\a_{\sigma_{n}+n }>a_{\sigma_{n}+n-1}}
    }\frac{1}{
(a_{\sigma_{n}+n}
\cdots a_{\sigma_{n+1}-1})^{2s}}
\sum_{a_{\sigma_{n+1}}= a_{\sigma_{n+1}-1}+1 }^\infty\sum_{M=1 }^{\infty}\left[\prod_{i=0}^{n}(a_{\sigma_{n+1}}+iM) \right]^{-2s},\end{split}\]
by noticing that there is only vector in $D_M^n$ once the first coordinate of the vector is given. Then using Lemma~\ref{lem-AP}, we get
\begin{align}I\leq & \sum_{\substack{ (a_{\sigma_{n}+n},\ldots,a_{\sigma_{n+1}-1})\in\mathbb N^{\ell_{n+1} }_{>}\\a_{\sigma_{n}+n }>a_{\sigma_{n}+n-1}}
    }\frac{1}{
(a_{\sigma_{n}+n}
\cdots a_{\sigma_{n+1}-1})^{2s}}
\sum_{a_{\sigma_{n+1}}= a_{\sigma_{n+1}-1}+1 }^\infty\frac{1}{a_{\sigma_{n+1}}^{2sn-1}}\frac{2sn}{2sn-1}\nonumber\\
\leq&\frac{1}{2sn-2}\cdot\frac{2sn}{2sn-1}\sum_{\substack{ (a_{\sigma_{n}+n},\ldots,a_{\sigma_{n+1}-1})\in\mathbb N^{\ell_{n+1} }_{>}\\a_{\sigma_{n}+n }>a_{\sigma_{n}+n-1}}
    }\frac{1}{
(a_{\sigma_n+n }
\cdots a_{\sigma_{n+1}-1})^{2s}}\frac{1}{a_{\sigma_{n+1}-1 }^{2sn-2}} \nonumber\\
\le &\!\!\!\!\!\!\!\sum_{\substack{ (a_{\sigma_{n}+n},\ldots,a_{\sigma_{n+1}-1})\in\mathbb N^{\ell_{n+1} }_{>}\\a_{\sigma_{n}+n }>a_{\sigma_{n}+n-1}}
    }\frac{1}{
(a_{\sigma_n+n }
\cdots a_{\sigma_{n+1}-2})^{2s}}\frac{1}{a_{\sigma_{n+1}-1 }^{2sn+2s-2}}.\label{2}\end{align}
If $\ell_{n+1}=0$, i.e. $\sigma_{n+1}=\sigma_n+n$, the first summation is empty, thus it is trivial that $$I\le \frac{1}{a_{\sigma_{n+1}-1 }^{2sn+2s-2}}\le 1.$$ If $\ell_{n+1}\ge 1$, we will apply Lemma~\ref{descend} to (\ref{2}). For the current case, the condition on $\gamma$ in Lemma~\ref{descend} reads as
$$2sn+2s-2>2-(\sigma_{n+1}-\sigma_n-n-1)(2s-1)$$
which is nothing but the inequality in \eqref{eq-c}. Moreover $a_{\sigma_n+n}>n$, thus 
one arrives at
\[\begin{split}I\leq&\frac{1}{n^{2sn-2+(\sigma_{n+1}-\sigma_n-n)(2s-1) }}
 =\frac{1}{n^{(\sigma_{n+1}-\sigma_n)(2s-1)+n-2 }}\leq 
 \frac{1}{n^\delta}<1.\end{split}\] 

Hence, $\mathscr{H}_{n}^s\leq \mathscr{H}_{n-1}^{s}$ for all $n>j$. By an iteration, one has$$
\mathscr{H}_{n}^s\le \frac{1}{(a_1\cdots a_{\sigma_{j}+j-1})^{2s}}\le 1.
$$
Thus $\dim_{\rm H}G_j\leq s$ as required in \eqref{reduced}.
\end{proof}

\section{Lower bounds and proofs of the main results}
In this section we establish lower bounds on the Hausdorff dimension of $F$-sets and $G$-sets.
Then we combine them with the upper bounds obtained in \S3, and complete the proofs of the main results.

\subsection{Lower bound for $F$-sets}\label{lower-F}
The next proposition implies the lower bound of Hausdorff dimension of $F$-sets.
\begin{pro}\label{lower-prop}
Let $(\nu_n)_{n=1}^\infty$ be an increasing sequence of integers. 
Then we have \[\dim_{\rm H} F((\nu_n)_{n=1}^\infty)\geq\frac{1}{2(1+\alpha)}, \ {\text{where}}\ \alpha=\liminf_{n\to\infty}\frac{\nu_n}{n}.\]\end{pro}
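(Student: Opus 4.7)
The case $\alpha=\infty$ is trivial, so assume $\alpha<\infty$ and fix $s$ with $0<s<1/(2(1+\alpha))$; the stated bound will follow by letting $s\uparrow 1/(2(1+\alpha))$. The plan is to construct a Cantor-like set $\Lambda\subseteq J\cap F((\nu_n)_{n=1}^\infty)$ carrying a probability measure $\mu$ via Lemma~\ref{abstract}, and then apply the mass distribution principle (Lemma~\ref{mass1}) to conclude $\dim_{\rm H}\Lambda\geq s$. The set $\Lambda$ will place APs of length $\nu_{m_k}$ along a rapidly growing subsequence $(m_k)$ where $\nu_{m_k}/m_k\to\alpha$.

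\textbf{Construction.} By definition of $\alpha$, pick a subsequence $(m_k)_{k\ge 1}$ in $\mathbb N$ with $\nu_{m_k}/m_k\to\alpha$, further refined so that $m_{k+1}/(m_k+\nu_{m_k})\to\infty$; this will allow discarding lower-order contributions from earlier blocks. Set $m_0=0$ and define the partition
\[V_1=\{1,\ldots,m_1\},\ \ W_k=\{m_k+1,\ldots,m_k+\nu_{m_k}-1\},\ \ V_{k+1}=\{m_k+\nu_{m_k},\ldots,m_{k+1}\},\]
which satisfies the hypotheses of Lemma~\ref{abstract}. Fix an integer $t\geq 2$ with $t>1/(1-2s(1+\alpha))$, so that $(t-1)/(2t(1+\alpha))>s$, set $L_n=[(2n)^t,(2n+1)^t)$, and let $(\Lambda,\mu)$ be produced by Lemma~\ref{abstract}. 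Part (b) gives $\Lambda\subseteq J$. For any $x\in\Lambda$ and $k\ge 1$, the digits $a_{m_k}(x),\ldots,a_{m_k+\nu_{m_k}-1}(x)$ form an AP of length $\nu_{m_k}$ with common difference $1$, so $n=m_k$ witnesses the defining property of $F((\nu_n)_{n=1}^\infty)$ infinitely often; hence $\Lambda\subseteq F((\nu_n)_{n=1}^\infty)$.

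\textbf{Dimension count.} For the local exponent of $\mu$ at $x\in\Lambda$, inspect the cylinder $I_{n_k}(a_1(x),\ldots,a_{n_k}(x))$ with $n_k=m_k+\nu_{m_k}-1$. Note $\#L_i\asymp t(2i)^{t-1}$, and that on free $i\in V_j$ one has $a_i\asymp (2i)^t$, while on AP $i\in W_j$ the bound $a_i\le (2m_j+1)^t+\nu_{m_j}$ (valid since $t\geq 2$ and $\nu_{m_j}$ grows polynomially in $m_j$) gives $\log a_i=t\log(2m_j)(1+o(1))$. The rapid growth of $(m_k)$ makes the $k$-th block dominate the cumulative logarithmic sums, and Lemma~\ref{qn} yields
\[-\log\mu(I_{n_k})\sim (t-1)\,m_k\log m_k,\qquad -\log|I_{n_k}|\sim 2t(1+\alpha)\,m_k\log m_k,\]
so the ratio converges to $(t-1)/(2t(1+\alpha))>s$.

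\textbf{Main obstacle: cylinder to ball.} The delicate step is converting the cylinder estimate into a bound $\mu(B(x,r))\le Cr^s$ required by Lemma~\ref{mass1}, because the strictly increasing and rapidly growing digits make $(n+1)$-cylinders inside a fixed $n$-th one vary hugely in length. Given $x\in\Lambda$ and $r$ small, pick $n$ with $|I_{n+1}(a_1(x),\ldots,a_{n+1}(x))|\le r<|I_n(a_1(x),\ldots,a_n(x))|$ and bound $\mu(B(x,r))$ by the number of $(n+1)$-cylinders in $\mathrm{supp}\,\mu$ that meet $B(x,r)$. Because the admissible window $L_{n+1}\subset[(2n+2)^t,(2n+3)^t)$ has relative width $O(1/n)$, the $\mu$-charged sub-cylinders of $I_n$ are of comparable length by Lemma~\ref{qn}, so the count is $O(r/|I_{n+1}|)$; when $n+1\in W_k$ only one admissible digit exists and the bound is immediate. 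Inserting the cylinder estimate above and then sending $s\uparrow 1/(2(1+\alpha))$ completes the proof.
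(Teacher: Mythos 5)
Your proposal is correct and takes essentially the same route as the paper: the same free/AP block partition $\{V_k,W_k\}$ along a sparse subsequence with $\nu_{m_k}/m_k\to\alpha$, the same measure from Lemma~\ref{abstract} with $L_n=[(2n)^t,(2n+1)^t)$, the same asymptotic local exponent $(t-1)/(2t(1+\alpha))$, and the mass distribution principle (your count of comparable $(n+1)$-cylinders replaces the paper's Lemmas~\ref{l4} and \ref{l5}, which is an equivalent device). The only point to tighten is that the local exponent must be bounded below for \emph{all} large $n$, not just at $n=m_k+\nu_{m_k}-1$; the paper's Lemma~\ref{ratio} does this via $\min\{A_k,B_k\}$ together with $(a+c)/(b+d)\ge\min\{a/b,c/d\}$, and your computation correctly identifies the end of an AP block as the binding case.
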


\begin{proof}
Write $F=F((\nu_n)_{n=1}^\infty)$. By the definition of $\alpha$, choose a strictly increasing sequence of positive integers $(n_k)_{k=1}^\infty$  such that
\[\lim_{k\to \infty}\frac{\nu_{n_k}}{n_k}=\alpha\ \text{ and }\ n_k+\nu_{n_k}<   n_{k+1} \ \text{ for every }k\geq1,\]
and \[\text{for each $k\geq2$, $n_k$ is sufficiently large compared to $n_1,\ldots,n_{k-1}$.}\]
 To
avoid triple subscripts,
put $\nu(k)=\nu_{n_k}$.
Set $n_0+\nu(0)=1$, and for each $k\geq1$ define sets $V_k$, $W_k$ of positive integers by
\[V_k=\{n_{k-1}+\nu(k-1),\ldots,n_k\}\ \text{ and }\ W_k=\{n_k+1,\ldots,n_k+\nu(k)-1\},\]
which correspond to the positions for the free segments and AP segments respectively in the sequence of the partial quotients of $x\in F((\nu_n)_{n=1}^\infty)$.
Then $\{V_1,W_1,V_2,W_2,\ldots\}$ is a partition of $\mathbb N$ into bounded intervals.
 For an integer $t\geq2$, define 

\[\Lambda_t(F)=
\left\{
\begin{tabular}{l}
\!\!\!$x\in (0,1)\setminus\mathbb Q\colon$\!\!\!
 $(2n)^t\le a_n(x)< (2n+1)^t$ for $n\in \bigcup_{k=1}^\infty V_k$ and \!\!\!\!\\
 \ \ \ \ \ \ \ \ \ \ \ \ \ \ \ \ \ \  $a_{n}(x)=a_{n_k }(x)+n-n_k$ for $n\in W_k$, $k\in \mathbb N$ \end{tabular}
\right\}.
\]
By Lemma~\ref{abstract}(a), there exists
a Borel probability measure $\mu$ on $[0,1]$
such that $\mu(\Lambda_t(F))=1$, and
for all $n\in\mathbb N$ and all $(a_1,\ldots,a_n)\in\mathbb N^n$ with
$I_n(a_1,\ldots,a_n)\cap \Lambda_t(F)\neq\emptyset$,
\[\mu(I_n(a_1,\ldots, a_n))=\begin{cases}\displaystyle{\prod_{i=1}^n\frac{1}{(2i+1)^t-(2i)^t}}&\text{ for }n\in V_1,\\
\displaystyle{\prod_{m=1}^k
\prod_{i\in V_m}\frac{1}{(2i+1)^t-(2i)^t}}&\text{ for }
n\in W_k,\
k\geq1,\\
\displaystyle{\begin{split}&\prod_{m=1}^k\prod_{i\in V_m}\frac{1}{(2i+1)^t-(2i)^t}\\
&\times\prod_{i=\min V_{k+1} }^{n}\frac{1}{(2i+1)^t-(2i)^t}\end{split}}
&\text{ for }
n\in V_{k+1},\ k\geq1.
\end{cases}\]
By Lemma~\ref{abstract}(b), $\Lambda_t(F)$ is contained in $F$.

For each integer $t\geq2$,
we estimate the Hausdorff dimension of $\Lambda_t(F)$ from below.
For each $k\in\mathbb N$, define two positive numbers. The first one is \[A_k=\frac{\displaystyle{\sum_{m=1}^k\sum_{i\in V_m    }\log [(2i+1)^t-(2i)^t]}}{2\left[\displaystyle{\sum_{m=1}^k
\sum_{i\in V_m
    }t\log(2i+1)+\sum_{m=1}^k(\nu(m)-1)\log \Big((2n_m+1)^t+\nu(m)\Big)}\right]+\log 2}\]
where the numerator corresponds to the measure of a cylinder $I_n(a_1,\cdots, a_n)$ for all $n_k\le n<n_k+\nu(k)$ and the denominator corresponds to the length of the  cylinder $I_{n_k+\nu(k)-1}(a_1,\cdots,a_{n_k+\nu(k)-1})$. The second number is
 \[B_k=\inf_{n\in V_{k+1} }\frac{\displaystyle{\sum_{i=\min V_{k+1} }^n \log [(2i+1)^t-(2i)^t]}}{\displaystyle{2 \sum_{i=\min V_{k+1} }^{n} t\log (2i+1) }}.\]
 \begin{lem}\label{AkBk}
We have
\[\lim_{k\to\infty}A_k=  \frac{t-1}{2t(1+\alpha)} \ \text{ and } \ \lim_{k\to\infty}B_k=
 \frac{t-1}{2t}.\]\end{lem}
 \begin{proof}
Keeping the condition $\lim_{k\to\infty}\nu(k)/n_k=\alpha$, if necessary we replace each $n_k$ inductively by a sufficiently large integer compared to $n_1,\ldots,n_{k-1}$ so that the following three fractions
\[\begin{split}
&\frac{\displaystyle{\sum_{m=1}^k\sum_{i\in V_m}\log (2i)}}{n_k\log n_k},\
\frac{\displaystyle{\sum_{m=1}^k\sum_{i\in V_m   }\log (2i+1)}}{n_k\log n_k},\ \frac{\displaystyle{\sum_{m=1}^k(\nu(m)-1)\log [(2n_m+1)^t+\nu(m)]}}{t\nu(k)\log n_k}
\end{split} \]
converge to $1$
as $k\to \infty$.
This yields
$A_k\to (t-1)/(2t(1+\alpha))$ as required.
The convergence $B_k\to
 (t-1)/(2t)$ is obvious. 
 \end{proof}

\begin{lem}\label{ratio}
For any integer $t\geq2$,
 all $x\in \Lambda_t(F)$, all
  $k\in\mathbb N$ and all $n_{k}< n\le n_{k+1}$, we have
\[\frac{\log \mu(I_n(a_1(x),\ldots, a_n(x)))}{\log |I_{n}(a_1(x),\ldots, a_{n}(x))|}\ge \min\{A_k, B_k\}.\]\end{lem}
\begin{proof}
 Let $x\in \Lambda_t(F)$ and $k\in\mathbb N$.
We write $a_n$ for $a_n(x)$.
For all
$n_k\leq n< n_{k}+\nu(k)$, by the definition of $\mu$ and the length of a cylinder \eqref{qn-2},
 we have
\begin{align}
\frac{\log \mu(I_n(a_1,\ldots, a_n))}{\log |I_{n}(a_1,\ldots, a_{n})|}
&\ge \frac{\log \mu(I_{n_k}(a_1,\ldots, a_{n_k}))}{\log |I_{n_k+\nu(k)-1}(a_1,\ldots, a_{n_k+\nu(k)-1})|}\nonumber\\
&\ge \frac{\displaystyle{\sum_{m=1}^k\sum_{i\in V_m}\log [(2i+1)^t-(2i)^t]}}{\log 2+2\displaystyle{\sum_{i=1}^{n_k+\nu(k)-1}\log (a_i+1)}}\label{f1}\\
&=\frac{\displaystyle{\sum_{m=1}^k\sum_{i\in V_m}\log [(2i+1)^t-(2i)^t]}}{\log 2+2\left[\displaystyle{\sum_{m=1}^k\sum_{i\in V_m}\log (a_i+1)+\sum_{m=1}^k\sum_{i\in W_m}\log (a_i+1)}\right]}. \nonumber\end{align}
Recall the choice of the partial quotients:  \begin{align*}
a_i+1\le \left\{
           \begin{array}{ll}
             (2i+1)^t, & \hbox{for $i\in V_m$;} \\
             a_{n_m}+\nu(m)\le (2n_m+1)^t+\nu(m), & \hbox{for $i\in W_m$.}
           \end{array}
         \right.
\end{align*} Note that there are $\nu(m)-1$ elements in $W_k$. One has
\[\begin{split}
&\frac{\log \mu(I_n(a_1,\ldots, a_n))}{\log |I_{n}(a_1,\ldots, a_{n})|}\\
\ge &\frac{\displaystyle{\sum_{m=1}^k\sum_{i\in V_m    }\log [(2i+1)^t-(2i)^t]}}{\log 2+ 2\left[\displaystyle{\sum_{m=1}^k
\sum_{i\in V_m
    }t\log(2i+1)+\sum_{m=1}^k(\nu(m)-1)\log \Big((2n_m+1)^t+\nu(m)\Big)}\right]}\\
=&A_k.
\end{split}\]

For $n_k+\nu(k)\le n\le n_{k+1}$, by the definition of $\mu$ and the length of a cylinder \eqref{qn-2},
 we have
\begin{align*}
\frac{\log \mu(I_n(a_1,\ldots, a_n))}{\log |I_{n}(a_1,\ldots, a_{n})|}
&\ge \frac{\displaystyle{\sum_{m=1}^k\sum_{i\in V_m}\log [(2i+1)^t-(2i)^t]+\sum_{i=\min V_{k+1} }^n \log [(2i+1)^t-(2i)^t]}}{\log 2+2\left[\displaystyle{\sum_{i=1}^{n_k+\nu(k)-1}\log (a_i+1)+\sum_{i=\min V_{k+1} }^n \log (a_i+1)}\right]}.\end{align*}
Recalling (\ref{f1}), it follows that
$$\frac{\log \mu(I_n(a_1,\ldots, a_n))}{\log |I_{n}(a_1,\ldots, a_{n})|}\ge \min\{A_k, B_k\}.$$
The proof of Lemma~\ref{ratio} is completed.
\end{proof}

\begin{lem}\label{l4}
For any $x\in \Lambda_t(F)$, $$
\lim_{n\to \infty}\frac{\log |I_n(x)|}{\log |I_{n+1}(x)|}=1.
$$
\end{lem}\begin{proof}
By the length of a cylinder \eqref{qn-2}, the claim follows from the fact $$
\lim_{n\to\infty}a_n(x)=\infty, \ {\text{and}}\ \lim_{n\to \infty}\frac{\log a_n(x)}{\log a_{n+1}(x)}=1.
$$
\end{proof}

\begin{lem}\label{l5}
Let $x$ be an irrational number. Assume that $a_n(x)\ge 2$ for some $n\in \mathbb{N}$. Let $r>0$ be such that $$
|I_{n+1}(x)|\le r< |I_n(x)|.
$$
Then the ball $B(x,r)$ can intersect at most 4 cylinders of order $n$.
\end{lem}\begin{proof}
By the monotonicity of $x$ with its partial quotients, the cylinder $I_n(a_1(x),\cdots,$ $a_n(x)-1)$ lies in one side of $x$ and $$
I_n(a_1(x),\cdots, a_n(x)+1), \ I_n(a_1(x),\cdots, a_n(x)+2)
$$ lie in the other side of $x$. Moreover, a direct calculation shows that $$
|I_n(x)|\le |I_n(a_1(x),\cdots, a_n(x)-1)|$$ and $$ |I_n(x)|\le |I_n(a_1(x),\cdots, a_n(x)+1)|+ |I_n(a_1(x),\cdots, a_n(x)+2)|.
$$ Thus $$
B(x,r)\subset \bigcup_{i=-1}^2I_n(a_1(x),\cdots, a_n(x)+i).
$$
\end{proof}
We are in position to complete the proof of Proposition~\ref{lower-prop}.
Let $x\in \Lambda_t(F)$ and $r\in(0,1)$. We write $a_n$ for $a_n(x)$. Take  $n\in\mathbb N$ such that
\[|I_{n+1}(a_1,\ldots,a_{n+1})|\leq r<|I_n(a_1,\ldots,a_n )|.\]
By Lemma \ref{l5}, we have $\mu(B(x,r))\leq 4\mu(I_n(a_1,\ldots,a_n ))$, and
\[\frac{\log\mu(B(x,r))}{\log r}\geq \frac{\log[4\mu(I_n(a_1,\ldots,a_n ))]}{\log|I_{n+1}(a_1,\ldots,a_{n+1} )|}.\]
By Lemmas~\ref{AkBk}, \ref{ratio} and \ref{l4}, we obtain
\[\liminf_{r\to 0}\frac{\log\mu(B(x,r))}{\log r}\ge \frac{t-1}{2t(1+\alpha)}.\]
Lemma~\ref{mass1} gives
$\dim_{\rm H}F\geq\dim_{\rm H} \Lambda_t(F)\ge (t-1)/(2t(1+\alpha)).$
Letting $t\to \infty$ we obtain $\dim_{\rm H} F\ge 1/(2(1+\alpha)).$  The proof of Proposition~\ref{lower-prop} is complete.
\end{proof}

\subsection{Proof of Theorem~\ref{thmE}}\label{pfthmE}
 If $\alpha\in(0,\infty)$, then
  Propositions~\ref{upper-prop} and \ref{lower-prop} yield
 $\dim_{\rm H} F=1/(2(1+\alpha))$.
If $\alpha=0$, then
Proposition~\ref{lower-prop} yields
 $\dim_{\rm H} F\geq1/2$.
The reverse inequality follows from
$F\subset J$ and $\dim_{\rm H}J=1/2$ from \cite{R85}.
 If $\alpha=\infty$, then
 Proposition~\ref{upper-prop} yields
$\dim_{\rm H} F=0$.
\qed

\subsection{Lower bounds for $G$-sets }\label{lower-G}
The next proposition implies the lower bound for the Hausdorff dimension of $G$-sets.
\begin{pro}
\label{lower-prop2} Let $(\sigma_n)_{n=1}^\infty$ be a strictly increasing sequence of positive integers with $\sigma_{n+1}\ge \sigma_n+n$ for all $n\ge 1$ and $\lim_{n\to \infty}\frac{\sigma_n-\sigma_{n-1}}{n}=\beta$.
Then we have
\[\dim_{\rm H}  G((\sigma_n)_{n=1}^\infty)\geq\begin{cases}\vspace{1.5mm}\displaystyle{\frac{\beta-1}{2\beta}}&\text{ if }\beta<\infty,\\
\displaystyle{\ \  \frac{1}{2}}&\text{ otherwise}.\end{cases}
\]
\end{pro}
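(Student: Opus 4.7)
\medskip
\noindent\textbf{Proof plan for Proposition~\ref{lower-prop2}.}

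The plan is to mirror the strategy of Proposition~\ref{lower-prop}: build a suitable subset $\Lambda_t(G) \subset G((\sigma_n)_{n=1}^\infty)$ depending on a parameter $t \geq 2$, equip it with a Borel probability measure via Lemma~\ref{abstract}, and then apply the mass distribution principle. For a large fixed $k_0$, I would partition $\mathbb N$ by declaring $V_1 = \{1,\ldots,\sigma_{k_0}\}$, and for $j \geq 1$
\[W_j = \{\sigma_{k_0+j-1}+1,\ldots,\sigma_{k_0+j-1}+(k_0+j-1)-1\},\quad V_{j+1} = \{\sigma_{k_0+j-1}+(k_0+j-1),\ldots,\sigma_{k_0+j}\},\]
so that $\max V_j$ is the starting position of the $(k_0+j-1)$-th prescribed AP. For $t \in \mathbb N_{\geq 2}$, set
\[\Lambda_t(G) = \left\{x \in (0,1)\setminus\mathbb Q :
\begin{array}{l}(2n)^t \leq a_n(x) < (2n+1)^t \text{ for } n \in \bigcup_k V_k,\\ a_n(x) = a_{\max V_j}(x) + n - \max V_j \text{ for } n \in W_j\end{array}\right\}.\]
Lemma~\ref{abstract}(a) produces the desired measure $\mu$ with $\mu(\Lambda_t(G))=1$ and explicit cylinder masses; Lemma~\ref{abstract}(b) gives $\Lambda_t(G) \subset J$; the AP constraints force $\Lambda_t(G)\subset G((\sigma_n)_{n=1}^\infty)$.

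The central computation is the analogue of Lemma~\ref{AkBk}. Define
\[A_k = \frac{\sum_{m \leq k}\sum_{i \in V_m} \log[(2i+1)^t-(2i)^t]}{\log 2 + 2\sum_{m \leq k}\sum_{i \in V_m} t\log(2i+1) + 2\sum_{m \leq k}(\#W_m)\log\bigl((2\max V_m+1)^t+\#W_m\bigr)}\]
and $B_k = \inf_{n \in V_{k+1}} \bigl(\sum_{i=\min V_{k+1}}^n \log[(2i+1)^t-(2i)^t]\bigr)/\bigl(2\sum_{i=\min V_{k+1}}^n t\log(2i+1)\bigr)$. Assuming $\beta < \infty$, the hypothesis gives $\sigma_k \sim \beta k^2/2$; splitting $\sum_{i=1}^{\sigma_k}\log(2i)$ into its V-contribution (total size $\sim (\beta-1)k^2/2$) and W-contribution (total size $k(k-1)/2$, with partial quotients on $W_m$ of size $\sim \sigma_m^t$) and then using $\log\sigma_m \sim 2\log m$, one finds that numerator and denominator of $A_k$ are both of order $k^2 \log k$, with respective leading constants $(t-1)(\beta-1)$ and $2t\beta$. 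Thus
\[\lim_{k\to\infty} A_k = \frac{(t-1)(\beta-1)}{2t\beta}, \qquad \lim_{k\to\infty} B_k = \frac{t-1}{2t}.\]
When $\beta = \infty$ the W-contribution to the denominator becomes negligible compared to the V-contribution, and both limits coincide with $(t-1)/(2t)$.

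With these limits in hand, the analogues of Lemmas~\ref{ratio}, \ref{l4}, \ref{l5} carry over verbatim, since they rely only on the $\mu$-formula and on the size bounds $a_n(x) < (2n+1)^t$ on V and $a_n(x) \leq (2\max V_m + 1)^t + \#W_m$ on W. Lemma~\ref{mass1} then yields $\dim_{\rm H}\Lambda_t(G) \geq \min\{\liminf A_k, \liminf B_k\} = (t-1)(\beta-1)/(2t\beta)$ when $\beta < \infty$, and $\dim_{\rm H}\Lambda_t(G) \geq (t-1)/(2t)$ when $\beta = \infty$. Letting $t \to \infty$ produces the required lower bounds. The main obstacle is the asymptotic bookkeeping in the middle step: in contrast with the $F$-set case, where only a single trailing AP segment drives the length estimate, here the cumulative contributions of all prior blocks $W_1,\ldots,W_k$ must be tracked and balanced against the V-contribution through the quadratic growth $\sigma_k \sim \beta k^2/2$.
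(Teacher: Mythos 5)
Your proposal is correct and follows essentially the same route as the paper: the same block decomposition into free segments $V_k$ and AP segments $W_k$ aligned with the positions $\sigma_k$, the same measure from Lemma~\ref{abstract}, the same quantities $A_k$, $B_k$, and the same asymptotics $\sigma_k\sim\beta k^2/2$ producing the leading constants $(t-1)(\beta-1)$ and $2t\beta$, followed by the mass distribution principle and $t\to\infty$. The one spot where the paper works harder than your sketch is the case $\beta=\infty$: to show the $W$-contribution $\sum_m m\log\sigma(m)$ is negligible against $\sum_m(\sigma(m)-\sigma(m-1)-m)\log\sigma(m-1)$ one must control $\log\sigma(m)/\log\sigma(m-1)$, which the paper does by splitting into the cases $\sigma(m)\ge 4\sigma(m-1)$ and $\sigma(m)\le 4\sigma(m-1)$; your assertion that this contribution is negligible is correct but would need that small additional argument.
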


\begin{proof}We proceed much in parallel to the proof of Proposition~\ref{lower-prop} with similar notation.
When $\beta=1$, there is nothing to prove since we have proved $\dim_H G((\sigma_n)_{n=1}^\infty)\le 0$. Now assume $\beta>1$.

%
To
avoid triple subscripts, put $\sigma(k)=\sigma_{k}$.
Set $\sigma(0)=1$ for convenience.
  For each $k\in\mathbb N$, define sets $V_k$, $W_k$ of positive integers by
  \[V_k=\{\sigma(k-1)+{k-1},\ldots,\sigma(k)\}\ \text{ and }\ W_k=\{\sigma(k)+1,\ldots,\sigma(k)+k-1\}.\]
Then $\{V_1,W_1,V_2,W_2,\ldots\}$ is a partition of $\mathbb N$ into bounded intervals.
For an integer $t\ge 2$, define
\[\Lambda_t(G)=
\left\{
\begin{tabular}{l}
\!\!\!$x\in (0,1)\setminus\mathbb Q\colon$\!\!\!
$(2n)^t\le a_n(x)< (2n+1)^t$ for $n\in \bigcup_{k=1}^\infty V_k$ and \!\!\!\\ \ \ \ \ \ \ \ \ \ \ \ \ \ \ \ \ \ \
$a_{n}(x)=a_{\sigma(k) }(x)+n-\sigma(k)$ for $n\in W_k$, $k\in \mathbb N$\!\!\end{tabular}
\right\}.
\]
By Lemma~\ref{abstract}(a), there exists
a Borel probability measure $\mu$ on $[0,1]$
such that $\mu(\Lambda_t(G))=1$, and
for all $n\in\mathbb N$ and all $(a_1,\ldots,a_n)\in\mathbb N^n$ with
$I_n(a_1,\ldots,a_n)\cap \Lambda_t(G)\neq\emptyset$,
we have
\[\mu(I_n(a_1,\ldots, a_n))=\begin{cases}\displaystyle{\prod_{i=1}^n
\frac{1}{    (2i+1)^t-(2i)^t }  }&\text{ for }n\in V_1,\\
\displaystyle{\prod_{m=1}^k
\prod_{i\in V_m}\frac{1}{ (2i+1)^t-(2i)^t  }}&\text{ for }
n\in W_k,\
k\geq1,\\
\displaystyle{\begin{split}&\prod_{m=1}^k\prod_{i\in V_m}\frac{1}{ (2i+1)^t-(2i)^t }\\
&\times\prod_{i=\min V_{k+1} }^{n}\frac{1}{ (2i+1)^t-(2i)^t }\end{split}}
&\text{ for }
n\in V_{k+1},\ k\geq1.
\end{cases}\]
By Lemma~\ref{abstract}(b), $\Lambda_t(G)$ is contained in $G$.

We estimate the Hausdorff dimension of $\Lambda_t(G)$ from below.
For each $k\in\mathbb N$, define two positive numbers

    \[A_k=
    \frac{\displaystyle{\sum_{m=1}^{k }\sum_{i\in V_m    }\log [(2i+1)^t-(2i)^t]}}{\log 2+2\left[\displaystyle{
 \sum_{m=1}^{k }
\sum_{i\in V_m
    }t\log(2i+1)+\sum_{m=1}^k(m-1)\log [(2\sigma(m)+1)^t+m] }\right]} \]

    and
 \[B_k=\inf_{n\in V_{k+1}}\frac{\displaystyle{\sum_{i=\min V_{k+1} }^n \log [(2i+1)^t-(2i)^t]}}{\displaystyle{2 \sum_{i=\min V_{k+1} }^{n} t\log (2i+1)  }}.\]
\begin{lem}\label{CkDk}
For all $\beta\in[1,\infty]$,
\[\begin{split}\liminf_{k\to\infty} A_k&\geq\begin{cases}\vspace{1mm} \displaystyle{\frac{(t-1)(\beta-1)}{2t\beta}}\ &\text{ if }\beta<\infty,\\
 \displaystyle{\ \ \frac{1}{2}}\ &\text{ otherwise,}\end{cases}\ \ {\text{and}}\
  \lim_{k\to\infty}B_k= \frac{t-1}{2t}. \end{split}\]\end{lem}
\begin{proof}
For two sequences $(a_m)_{m=1}^\infty$, $(b_m)_{m=1}^\infty$ of positive reals, let us write $a_m\sim b_m$ if $\lim_{m\to\infty}a_m/b_m=1$, and $a_m\lesssim b_m$ if $\liminf_{m\to\infty} a_m/b_m\leq1$. Since $\sigma(m)\ge m$ and $t>1$ we have
$(2\sigma(m)+1)^t+ m\sim (2\sigma(m)+1)^t$. This yields
\[\begin{split}
    &\frac{\displaystyle{\sum_{i\in V_m    }\log [(2i+1)^t-(2i)^t]}}{\displaystyle{
\sum_{i\in V_m
    }t\log(2i+1)+(m-1)\log [(2\sigma(m)+1)^t+m] }}\\
    \sim&\frac{\displaystyle{\sum_{i\in V_m    }(t-1)\log (2i)}}{\displaystyle{
\sum_{i\in V_m
    }t\log(2i+1)+t(m-1)\log \sigma(m)}}\sim\frac{t-1}{t}\cdot\frac{1}{\displaystyle{
1+\frac{m\log \sigma(m) }{\sum_{i\in V_m    }\log i}}}.\end{split}\]

Note at first that $$
\frac{m\log \sigma(m)}{\sum_{i\in V_m}\log i}\le \frac{m\log \sigma(m)}{(\sigma(m)-\sigma(m-1)-m)\log \sigma(m-1)}.
$$ When $\beta<\infty$, it follows that $$
\frac{\sigma(m)}{m^2}\sim \beta/2,\ {\text{and thus}}\ \ \frac{\log \sigma(m)}{\log \sigma(m-1)}\sim 1.
$$ Consequently, $$
\liminf_{k\to \infty}A_k\ge \frac{t-1}{2t}\cdot \frac{1}{1+\frac{1}{\beta-1}}=\frac{(t-1)(\beta-1)}{2t\beta}.
$$
Now consider the case when $\beta=\infty$. Note at first that $$
\sigma(m)-\sigma(m-1)\ge m\Longrightarrow \sigma(m)\ge \frac{m^2}{4}.
$$ Secondly, if $\sigma(m)\ge 4\sigma(m-1)\ge 2m$, one has
$$\frac{m\log \sigma(m)}{(\sigma(m)-\sigma(m-1)-m)\log \sigma(m-1)}\le \frac{4m\log \sigma(m)}{\sigma(m)}\le \frac{4\sqrt{4\sigma(m)}\log \sigma(m)}{\sigma(m)};$$
if $\sigma(m)\le 4\sigma(m-1)$, $$\frac{m\log \sigma(m)}{(\sigma(m)-\sigma(m-1)-m)\log \sigma(m-1)}\le \frac{m\log 4+m\log \sigma(m-1)}{(\sigma(m)-\sigma(m-1)-m)\log \sigma(m-1)},$$ both of which tends to $0$ as $m\to\infty$. Consequently, $$
\liminf_{k\to \infty}A_k\ge \frac{t-1}{2t}.
$$
The convergence $B_k\to
 (t-1)/(2t)$ is obvious from the definition of $\Lambda_t(G)$.
\end{proof}
The remaining argument is absolutely the same as the proof the $F$-sets from Lemma \ref{ratio} on, so we omit the details.
%
\end{proof}

\subsection{Proof of Theorem~\ref{thmG}}
Let $(\sigma_n)_{n=1}^\infty$ be a strictly increasing sequence of positive integers with $\lim_{n\to \infty}\frac{\sigma_n-\sigma_{n-1}}{n}=\beta\ge 1$.
If $\beta\in[1,\infty)$, then Propositions~\ref{upper-prop-new} and \ref{lower-prop2} yield
$\dim_{\rm H}G=(\beta-1)/(2\beta)$.
If $\beta=\infty$, then Proposition~\ref{lower-prop2}
 yields $\dim_{\rm H}G\geq1/2$.
 The reverse inequality follows from
 $G\subset J$
 and $\dim_{\rm H}J=1/2$ from \cite{R85}.\qed

\subsection*{Acknowledgments}
This research was supported by National Key R$\&$D Program of China (No.2024YFA1013700), JSPS KAKENHI 25K17282, 25K21999, and NSFC 12331005.

\end{document}